\def\blfootnote{\xdef\@thefnmark{}\@footnotetext}
\newcommand\ccnote{
    \blfootnote{\ccLogo\, \ccAttribution\,\, Licensed under a Creative Commons Attribution License (CC-BY).}
}
\numberwithin{equation}{section}
\renewcommand{\leq}{\leqslant}
\renewcommand{\geq}{\geqslant}
\renewcommand{\mathbb}{\varmathbb}
\newtheorem{theorem}{Theorem}[section]
\newtheorem{lemma}[theorem]{Lemma}
\newtheorem{proposition}[theorem]{Proposition}
\newtheorem{remark}[theorem]{Remark}
\providecommand{\ip}[1]{\langle#1\rangle}
\providecommand{\abs}[1]{\left\lvert#1\right\rvert}
\providecommand{\norm}[1]{\left\|#1\right\|}
\def\eps{\varepsilon}
\def\e{{\rm e}} 
\def\dd{{\rm d}}
\def\ddt{{\frac{\dd}{\dd t}}}
\def\R {\mathbb{R}}
\def\Z {\mathbb{Z}}
\def\N {\mathbb{N}}
\def\T {{\mathbb T}}
\def\de{{\partial}}
\def\cL {\mathcal{L}}
\address{Maria Colombo, Institute of Mathematics, EPFL, Station 8, 1015 Lausanne, Switzerland}
\email{maria.colombo@epfl.ch}
\address{Michele Coti Zelati, Department of Mathematics, Imperial College London, London, SW7 2AZ, UK}
\email{m.coti-zelati@imperial.ac.uk}
\address{Klaus Widmayer, Institute of Mathematics, EPFL, Station 8, 1015 Lausanne, Switzerland}
\email{klaus.widmayer@epfl.ch}
\begin{document}

\thispagestyle{empty}

\begin{minipage}{0.28\textwidth}
\begin{figure}[H]
\includegraphics[width=2.5cm,height=2.5cm,left]{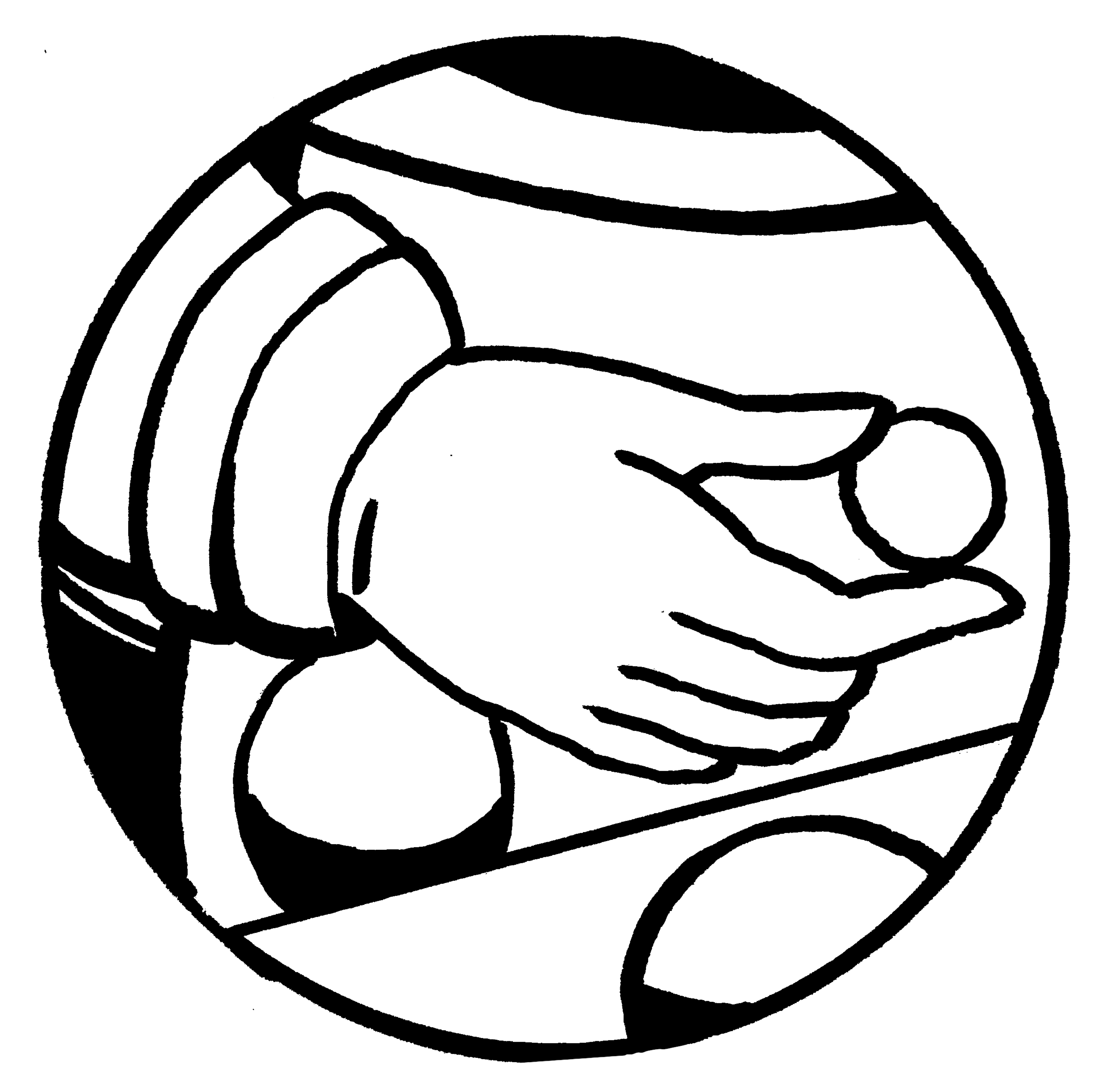}
\end{figure}
\end{minipage}
\begin{minipage}{0.7\textwidth} 
\begin{flushright}
Ars Inveniendi Analytica (2021), Paper No. 2, 21 pp.
\\
DOI 10.15781/83fc-j334
\end{flushright}
\end{minipage}

\ccnote

\vspace{1cm}


\begin{center}
\begin{huge}
\textit{Mixing and diffusion for rough shear flows}


\end{huge}
\end{center}

\vspace{1cm}


\begin{minipage}[t]{.28\textwidth}
\begin{center}
{\large{\bf{Maria Colombo}}} \\
\vskip0.15cm
\footnotesize{EPFL Lausanne}
\end{center}
\end{minipage}
\hfill
\noindent
\begin{minipage}[t]{.28\textwidth}
\begin{center}
{\large{\bf{Michele Coti Zelati}}} \\
\vskip0.15cm
\footnotesize{Imperial College London}
\end{center}
\end{minipage}
\hfill
\noindent
\begin{minipage}[t]{.28\textwidth}
\begin{center}
{\large{\bf{Klaus Widmayer}}} \\
\vskip0.15cm
\footnotesize{EPFL Lausanne} 
\end{center}
\end{minipage}

\vspace{1cm}


\begin{center}
\noindent \em{Communicated by Philip Isett}
\end{center}
\vspace{1cm}


\noindent \textbf{Abstract.} \textit{This article addresses mixing and diffusion properties of passive scalars advected by rough ($C^\alpha$) shear flows. We show that in general, one cannot expect a rough shear flow to increase the rate of inviscid mixing to more than that of a smooth shear. On the other hand, diffusion may be enhanced at a much faster rate. This shows that in the setting of low regularity, the interplay between inviscid mixing properties and enhanced dissipation is more intricate, and in fact contradicts some of the natural heuristics that are valid in the smooth setting.
}
\vskip0.3cm

\noindent \textbf{Keywords.} Mixing, Enhanced Dissipation, Passive Scalar, Stationary Phase, H\"older Flows. 
\vspace{0.5cm}


\section{Introduction}
This paper is concerned with the long-time behavior of the linear equation
\begin{align}\label{eq:passcal}
\begin{cases}
\de_t f +u\de_x f=\nu\Delta f \\
f(0)=f^{in} \qquad \qquad \int_\T f^{in}(x,y)\dd x=0,
\end{cases}
\end{align}
posed on the two-dimensional square torus $\T^2$. In \eqref{eq:passcal}, $f=f(t,x,y):[0,\infty)\times \T^2\to\R$ 
represents a passive scalar that is advected by 
a shear flow $u=u(y):\T\to\R$, and $\nu\in[0,1]$ is the diffusion coefficient that, when strictly positive, induces a dissipation
mechanism (due to molecular friction, for instance). Here the condition that $\int_\T f^{in}(x,y)\dd x=0$ naturally ensures that $f$ witnesses the effect of the transport operator.

We are interested in understanding the \emph{inviscid mixing} and \emph{enhanced diffusion} properties of $f$ in the case when $u$ is not smooth,
and in particular in the case of H\"older continuous flows. When \eqref{eq:passcal} is considered in its inviscid form with $\nu=0$, 
the term inviscid mixing refers to a transfer of energy
from large to small spatial scales for the scalar $f$. If a small amount of diffusion is introduced by taking a strictly positive $\nu\ll1$, 
the inviscid mixing mechanism is still present for relatively short times 
and the dissipation is therefore necessarily enhanced.

Our main result addresses the connection of the \emph{regularity} of the transporting vector field $u$ with the rate of such mixing effects.
We construct a large family of $C^\alpha$ shear flows which inviscidly mix at the same, fixed rate as a smooth shear flow without critical points, but for which the enhanced diffusion decay becomes arbitrarily fast
as $\alpha\to 0$.

\begin{theorem}\label{thm:main}
 There exists a dense set $\mathcal{A}\subset (0,1)$ such that for each $\alpha\in\mathcal{A}$ there exists a function $u\in C^\alpha(\T)$ which is sharply $\alpha$-H\"older, with the following properties: {all} solutions {$f(t)$} to the passive scalar problem \eqref{eq:passcal} satisfy
 \begin{enumerate}
  \item $\nu=0$: inviscid mixing, in the sense that there exists $C=C(u)>0$ such that 
  \begin{enumerate}
   \item for all $t>0$ there holds that 
   \begin{equation}\label{eq:mixing}
    \norm{f(t)}_{L^2_xH^{-1}_y}\leq C t^{-1}\norm{f^{in}}_{H^{-1}_xH^1_y},
   \end{equation}
   \item there exists a monotone sequence of times $(t_m)_{m\in\N}$, with $t_m\to\infty$, $m\to\infty$, along which the decay is faster, i.e.\ there holds
   \begin{equation}\label{eq:mixing_fast}
    \norm{f(t_m)}_{L^2_xH^{-1}_y}\leq Ct_m^{-\frac{1}{\alpha}}\norm{f^{in}}_{L^2_xH^1_y},
   \end{equation}
   \item {Moreover}, the decay \eqref{eq:mixing} is sharp {in the following sense}: {There exist initial data $f^{in}_\star$ with $\int_\T f^{in}_\star\dd x=0$} and a monotone sequence of times $(t'_m)_{m\in\N}$, with $t'_m\to\infty$ as $m\to \infty$
   such that the corresponding solution $f_\star(t)$ of {\eqref{eq:passcal}} satisfies
    \begin{equation}\label{eq:mixing_sharp}
    \norm{f_\star(t'_m)}_{L^2_xH^{-1}_y}\geq C(t'_m)^{-1}\norm{f^{in}_\star}_{L^2_xH^1_y}.
    \end{equation}
  \end{enumerate}

  \item $\nu>0$: dissipation is enhanced, in the sense that there exists $\eps>0$ such that
    \begin{align}\label{eq:enhance}
   \norm{f(t)}_{L^2}\leq C \e^{-\eps \nu^{\frac{\alpha}{\alpha+2}}t}\norm{f^{in}}_{L^2}.
  \end{align}
 \end{enumerate}
\end{theorem}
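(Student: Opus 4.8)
The plan is to diagonalize \eqref{eq:passcal} in the transported variable $x$ and reduce everything to one-dimensional estimates for the phase $u$. Expanding $f(t,x,y)=\sum_{k\neq0}f_k(t,y)\e^{ikx}$ (the mean-zero hypothesis on $f^{in}$ eliminating $k=0$), each mode solves $\partial_t f_k+ikuf_k=\nu(\partial_y^2-k^2)f_k$. For $\nu=0$ this integrates explicitly to $f_k(t,y)=\e^{-iktu(y)}f_k^{in}(y)$, so by Plancherel in $x$ the left-hand sides of \eqref{eq:mixing}--\eqref{eq:mixing_sharp} are governed, mode by mode, by the operator norm of the multiplier $g\mapsto\e^{-i\lambda u}g$ from $H^1_y$ to $H^{-1}_y$ with $\lambda=kt$; for $\nu>0$ they are governed by the $L^2_y$-decay of the semigroup generated by $H_k g:=ikug-\nu\partial_y^2 g+\nu k^2g$, uniformly in $k$.

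\emph{Construction of $u$.} I would take $\mathcal{A}$ to be a suitable countable dense subset of $(0,1)$ (for instance exponents of the form $\log p/\log q$, for which the arithmetic relations between scales used below can be arranged) and, for $\alpha\in\mathcal{A}$, build $u$ as a lacunary, self-similar series --- a Weierstrass-type function with a rapidly increasing frequency sequence $(N_j)_j$ and amplitudes comparable to $N_j^{-\alpha}$. From its explicit form one checks: (i) $u\in C^\alpha(\T)$ and is sharply $\alpha$-H\"older; (ii) $u$ is piecewise monotone between its extrema, with the extrema being $\alpha$-cusps rather than smooth critical points, and --- the key quantitative fact --- the sublevel-set bound
\[
\abs{\{y\in\T:\abs{u(y)-c}<\delta\}}\le C\,\delta^{1/\alpha}\qquad\text{for all }c\in\R,\ \delta\in(0,1),
\]
expressing that $u$ is as nondegenerate as a sharply-$C^\alpha$ function can be.

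\emph{Part (2): enhanced dissipation.} With (ii) in hand this follows by a now-standard resolvent estimate for shear flows, the sublevel-set bound playing the role usually played by nondegeneracy of critical points. Pairing $(H_k-i\xi)g=\psi$ with $g$ gives $\nu\norm{\partial_y g}^2+\nu k^2\norm{g}^2=\Re\ip{\psi,g}$ and $\int_\T(ku-\xi)\abs{g}^2=\Im\ip{\psi,g}$. Splitting $\T$ into the set $\{\abs{ku-\xi}\ge k\delta\}$, on which the transport term is coercive, and its complement, whose measure is $\le C\delta^{1/\alpha}$ and on which one uses the energy identity together with the $1$D Agmon inequality, one obtains $\norm{(H_k-i\xi)^{-1}}_{L^2\to L^2}\lesssim\max(\delta^{2/\alpha}\nu^{-1},\delta^{-1})$ uniformly in $\xi\in\R$ and $\abs{k}\ge1$. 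Optimizing at $\delta\sim\nu^{\alpha/(\alpha+2)}$ gives $\norm{(H_k-i\xi)^{-1}}\lesssim\nu^{-\alpha/(\alpha+2)}$, and the quantitative Gearhart--Pr\"uss theorem then yields $\norm{\e^{-tH_k}}_{L^2\to L^2}\lesssim\e^{-\eps\nu^{\alpha/(\alpha+2)}t}$; summing in $k$ gives \eqref{eq:enhance} (the regime $\nu\sim1$ being trivial). Note $\tfrac{\alpha}{\alpha+2}<\tfrac13$, so this rate exceeds the $\nu^{1/3}$ one would naively predict from the generic mixing rate $t^{-1}$ --- the discrepancy advertised in the abstract.

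\emph{Part (1): inviscid mixing.} This is the technical heart and the main obstacle, since $u\notin C^1$ forbids integration by parts in the phase and van der Corput must be replaced by a multi-scale argument. Decomposing $u=u_{<n}+u_{\ge n}$ along its frequency scales and choosing the cutoff $n=n(\lambda)$ so that $\lambda\norm{u_{\ge n}}_{L^\infty}\lesssim1$, one writes $\e^{-i\lambda u}=\e^{-i\lambda u_{<n}}\bigl(1+(\e^{-i\lambda u_{\ge n}}-1)\bigr)$: the principal phase $u_{<n}$ is smooth and piecewise monotone, so a change of variables --- using (ii) to control the inverse and the Jacobian near the $\alpha$-cusps --- produces $\lambda^{-1}$-decay, while the tail contributes through the $H^{-1}_y$-smallness of $\lambda u_{\ge n}$. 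This gives $\norm{\e^{-i\lambda u}g}_{H^{-1}_y}\lesssim\lambda^{-1}\norm{g}_{H^1_y}$, hence \eqref{eq:mixing} after summation over $k$. For the sequence $(t_m)$ one chooses times at which $\lambda=kt_m$ lies between consecutive scales of $u$ for every $k$, so that the principal phase is essentially trivial and only the cuspidal contributions, of size $\lambda^{-1/\alpha}$, survive, giving \eqref{eq:mixing_fast}; and \eqref{eq:mixing_sharp} is obtained by taking $f^{in}_\star$ concentrated (in $y$-frequency and location) at a single scale of $u$ where $u$ is nearly affine, so that the matching oscillatory integral decays no faster than $(t'_m)^{-1}$. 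The delicate points are the simultaneous choice of the time sequences across all Fourier modes $k$, and the quantitative change of variables near the $\alpha$-cusps.
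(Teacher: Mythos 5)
The enhanced-dissipation argument is built on a false premise: the sublevel-set bound $|\{y : |u(y)-c|<\delta\}| \le C\delta^{1/\alpha}$, uniformly in $c$, is impossible for any bounded continuous function. Integrating over $c$ and applying Fubini gives $\int_\R |\{y:|u(y)-c|<\delta\}|\,\dd c = 2\delta\,|\T|$, so the average of the sublevel-set measure over $c$ is exactly of order $\delta$ and cannot be $\lesssim\delta^{1/\alpha}\ll\delta$, since $1/\alpha>1$. For the shear constructed here (as for any bounded shear without degenerate critical points) the sublevel sets have measure of order $\delta$, the same as a smooth monotone shear. A resolvent estimate of the kind you describe -- the standard hypocoercivity/sublevel-set machinery -- therefore cannot see a rate faster than $\nu^{1/3}$, and the whole thrust of the theorem is that the rough shear beats this. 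The paper instead applies the Gearhart--Pr\"uss criterion of \cite{WEI18} through the quantity $\omega_1(\delta,u) = \inf_{\bar y, c_1, c_2}\int_{\bar y-\delta}^{\bar y+\delta}|\psi(y) - c_1 - c_2 y|^2\,\dd y$, a local \emph{non-affineness} of the stream function: it is the sharp lower H\"older bound on $u$, not any sublevel-set bound, that forces $\omega_1\gtrsim\delta^{2\alpha+3}\gg\delta^5$ and hence the accelerated rate $\nu^{\alpha/(\alpha+2)}$.

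The inviscid-mixing argument misses the core difficulty. The constructed $u$ (piecewise linear in the paper, Weierstrass-type in your proposal) is nowhere differentiable and has no isolated $\alpha$-cusps, so the ``change of variables in the principal phase'' is not available. What one can do is integrate by parts against the piecewise-linear approximant $u_n$ with $p^n\sim t$. The interior term then indeed gives $t^{-1/\alpha}$, but there are also $\sim(pq)^n$ boundary terms of size $\sim(tq^n)^{-1}$ each, which naively sum to $\sim p^n/t\sim 1$ -- no decay at all. Establishing \eqref{eq:mixing} hinges entirely on cancellations among these boundary terms, organized in the paper by the grid-point combinatorics of Lemma \ref{lem:Ssplit}; the paper explicitly remarks that this cancellation ``is not shared by a general Weierstrass-type function'', so your choice of construction would itself obstruct the argument. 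Similarly, the special times $t_m=2\pi p^m$ are selected so that $t_m s_m\ell_m = 2\pi$, an exact arithmetic resonance making $\e^{it_m u_m(y_m^j)}$ constant across all grid points and annihilating the boundary sum; ``$\lambda=kt_m$ between consecutive scales'' is not a sufficient description and in particular does not produce a time sequence independent of $k$. These are not presentational omissions but gaps that would prevent the proof from closing.
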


\begin{remark}
{Due to the shear structure of the velocity field, the equation \eqref{eq:passcal} decouples in Fourier space in the $x$-frequency. In particular, estimate \eqref{eq:enhance}  can be phrased in a frequency-localized fashion  as $ \norm{f_k(t)}_{H^{-1}_y}\leq C (|k|t)^{-1}\norm{f_k^{in}}_{H^1_y}$, with $f_k$, for $k\in \Z\setminus\{0\}$, being the $k$-th Fourier mode in $x$ of $f$.  
This gain of one derivative in $x$ leads to the more general bound
   \begin{equation}
    \norm{f(t)}_{H^\sigma_xH^{-1}_y}\leq C t^{-1}\norm{f^{in}}_{H^{\sigma-1}_xH^1_y},
   \end{equation}
  for any $\sigma \in \R$.
} 
\end{remark}

For the statements with full details we refer the reader to Propositions \ref{prop:inv_decay} and \ref{prop:enhdissip} below. The negative Sobolev norm $H^{-1}$ in \eqref{eq:mixing}, introduced in the context of inviscid mixing in \cite{LTD11} and 
now widely used \cite{CLS19,BM19mix,ACM19,EZ19,Seis13,IKX14,ACM19-2,LuLietc12}, 
precisely describes how energy is transferred to small scales (or high frequencies), and is sometimes referred to as mixing norm. Theorem~\ref{thm:main} can be stated also in terms of the  geometric mixing rate of these $C^\alpha$ shear flows;  {this is carried out in \cite{CSprep} and  links our work with Bressan's conjecture on mixing by non-smooth vector fields \cite{Bressan03}.}
When $\nu>0$,
the effect of enhanced dissipation \cite{CKRZ08}, can be quantified in terms of decay estimates on the $L^2$ norm of $f$, as in \eqref{eq:enhance}. 
Limiting ourselves to the case of passive scalars, we mention the recent works \cite{FI19,BN20,CZDE18,Seis20,BCZ17,CZ20,CZD19,WEI18,BW13,ABN2021}.

The surprising feature of Theorem \ref{thm:main} is that there is a discrepancy between the inviscid mixing and enhanced dissipation rates: While the roughness of $u\in C^\alpha$ is witnessed directly in the viscous problem with the fast, $\alpha$ dependent and sharp (see \cite{CZDri19}) rate proportional to $\nu^{\frac{\alpha}{\alpha+2}}$, the inviscid mixing generally only happens at the fixed rate $1/t$. In a smooth setting, this latter fact would suggest an enhanced diffusion decay rate proportional to $\nu^{1/3}$ (see also Section \ref{sub:diff}), which is exactly what happens in the Couette flow \cite{BGM19}, where $u(y)=y$, on the spatial domain $\T\times \R$. Nevertheless, we note that the faster rate $t^{-\frac{1}{\alpha}}$, which would more naturally correspond to fast diffusion, can still be seen in the inviscid problem along a sequence of times.

\subsection{Context of our result}\label{sub:diff}
To the best of our knowledge, there have been no prior works investigating the connection between low regularity and mixing rates. However, there is a body of work \cite{BCZ17,CZ20,BW13,CZDri19} regarding the connection of inviscid and viscous mixing rates in the setting of smooth shear flows. In this context, the following informal heuristic seems to correctly predict the relationship between inviscid mixing and enhanced dissipation (where it is known): Let us assume that for $\nu=0$, solutions to \eqref{eq:passcal}, {possibly localized in a single $x$-frequency shell}, obey the {(slightly simplified)} inviscid mixing estimate
\begin{equation}\label{eq:Heurmixing}
  \| f(t)\|_{H^{-1}}\leq \rho(t)  \| f^{in}\|_{H^{1}}, \qquad \forall t\geq 0,
\end{equation}
for a monotonically decreasing function $\rho:[0,\infty)\to[0,\infty)$ vanishing at infinity. 

For small positive $\nu$ and for sufficiently short time, one might expect this mechanism to still be the leading order effect, so that the dissipation $\nu \Delta f$ translates {in the energy estimates of \eqref{eq:passcal} to a damping term, i.e.\
\begin{equation}
 \frac{1}{2}\ddt\norm{f(t)}_{L^2}^2=-\nu \norm{\nabla f}_{L^2}^2\lesssim -\nu [\rho(t)]^{-2}\norm{f(t)}^2_{L^2}.
\end{equation}
This toy model thus} gives the decay estimate 
\begin{align}
\| f(t)\|_{L^2}\leq \e^{- C \nu \int_0^t [\rho(s)]^{-2}\dd s} \| f^{in}\|_{L^2},
\end{align}
so that one is tempted to at least guess the enhanced dissipation \emph{time scale} $\tau_\nu$ from the computation of $\nu \int_0^t [\rho(s)]^{-2}\dd s$.
For instance, for any $\beta>0$ we have
\begin{align}\label{eq:rateguess}
\rho(t)\sim t^{-\frac1\beta} \quad \Rightarrow \quad \nu \int_0^t [\rho(s)]^{-2}\dd s\sim \left(\nu^\frac{\beta}{\beta+2} t\right)^\frac{\beta+2}{\beta}\quad \Rightarrow\quad  \tau_\nu\sim \nu^{-\frac{\beta}{\beta+2}}.
\end{align}
While the super-exponential decay that is deduced by this heuristic argument should be ignored (at least whenever the corresponding linear operator in \eqref{eq:passcal} has non-empty spectrum), 
an inviscid mixing rate proportional to $t^{-\frac1\beta} $ would then correspond to an enhanced dissipation time-scale proportional to 
$\nu^{-\frac{\beta}{\beta+2}}$ (quantified in a way similar to \eqref{eq:enhance}). 

To the best of our knowledge, there is no rigorous result that fully justifies this computation, 
even in the case of smooth velocity fields. The only rigorous derivation of enhanced dissipation rates from mixing ones has been carried out in \cite{CZDE18} for a very general class of evolution problems of which \eqref{eq:passcal} is a particular case (see also \cite{FI19} for related results). However, the time scale deduced there is worse than the one predicted in \eqref{eq:rateguess}.

Nevertheless, it is remarkable that the heuristic \eqref{eq:rateguess} is in agreement with the known results for smooth velocity fields, namely for regular shear flows \cite{BCZ17,CZ20} and radial flows \cite{CZD19}, and velocity fields arising from the stochastic Navier-Stokes equations \cite{BBPS19a,BBPS19b} . In addition, for these the rates are known to be sharp \cite{CZDri19,BBPS19a,BBPS19b}. However, in the deterministic setting, the techniques used to analyze the inviscid and viscous problems are very different (typically oscillatory integral \cite{CZDE18,WZZ19} versus spectral theoretic \cite{SMM19,WZZ20} or hypocoercivity methods \cite{BCZ17,WZ19, CZ20,CZEW20}) and do not make use of the above ``connection''.

In view of this, our result Theorem \ref{thm:main} shows that while there may be hope to better understand the mechanisms behind the above heuristics for smooth velocity fields, in the realm of less regular flows this is not the case. The family of $C^\alpha$ flows we construct have uniform inviscid mixing rate $t^{-1}$ (see \eqref{eq:mixing} and \eqref{eq:mixing_sharp}), but nonetheless enhance dissipation at a faster rate proportional to $\nu^{\frac{\alpha}{\alpha+2}}$ (see \eqref{eq:enhance}). However, the inviscid decay may still be the faster $t^{-\frac{1}{\alpha}}$ along a sequence of times (see \eqref{eq:mixing_fast}).

We conclude that in the setting of rough flows, the interplay of inviscid mixing and enhanced dissipation is more intricate, and to a large extent remains to be understood. 

\subsection*{Perspectives}
Naturally, one may wonder to what extent (a version of) the heuristics \eqref{eq:rateguess} can be rigorously established in any setting. While we show that this is not possible for rougher flows, a general picture that connects regularity and mixing properties is still elusive. Even in the category of $C^\alpha$ flows it is not clear whether our result is generic in any sense, or whether other behaviors are to be expected.

\subsection{About the proof}
The basic mechanism behind the inviscid estimate \eqref{eq:mixing} is that oscillations lead to cancellation, which is typically exploited by integrations by parts in stationary phase type lemmas. Such results, namely estimates of the form 
\begin{align}\label{eq:statphase}
\int_\T \e^{itv(y)}\varphi(y)\dd y\leq Ct^{-\frac {1}{\beta}}\norm{\varphi'}_{L^{1}}, \qquad \forall t\in \R,
\end{align}
are classical in a context different from the one treated here: namely, when $\beta=n_0$ is a positive integer and the function $v$ belongs to $C^{n_0}$ and has no vanishing points of order $n_0$ (i.e.\ no points where all first $n_0$ derivatives vanish).

In our setting, $u \in C^\alpha$ and satisfies 
a lower H\"older estimate of order $\alpha$ (see Lemma \ref{lem:lowerHolder}): although this could be read as saying that ``the function $u$ is vanishing to a fractional order $\alpha$'', corresponding to the estimate \eqref{eq:statphase} with $\beta=\alpha$, the scenario is in fact more varied. 
By a careful choice of the function $u$ and of a sequence of times, we discover on one side, that in general no better rate than $\beta=1 $ in \eqref{eq:statphase} can be expected; on the other side, we show that we may get the faster rate $\beta= \alpha $ in \eqref{eq:statphase} for another particular sequence of times.

To prove this statement, we need to perform a precise construction of the family of shear flows $u\in C^\alpha$ in Section \ref{sec:constr}. It uses a natural, explicit limiting procedure,
in which $u$ is approximated by piecewise linear functions $u_m$ which iteratively increase the ``microstructure'' in a self-similar fashion, {akin to the classical construction of Weierstrass \cite{BPbook}.} From this, the sharp $C^\alpha$ property and some other, useful attributes of $u$ are deduced (see also Lemmas \ref{lem:lowerHolder} and \ref{lem:Ssplit}).
To obtain a stationary phase estimate as \eqref{eq:statphase}, while it is easy to believe that $u$ ``oscillates wildly'', in terms of integrations by parts one only has access to this feature at the level of the approximating functions $u_m$. {Here we use a cancellation property specific to our construction -- a property which is not shared by a general Weierstrass-type function.} Due to the fractal structure of these, a delicate analysis is required to deduce decay properties \eqref{eq:mixing} (see Section \ref{sec:inviscidmixing} and in particular Proposition \ref{prop:inv_decay}). Similar ideas then yield the decay bounds \eqref{eq:mixing_fast} and \eqref{eq:mixing_sharp} along sequences of time (see also Lemma \ref{lem:inv_decay_new}).

We finally remark that a faster speed than $\beta=\alpha$ is not expected in \eqref{eq:statphase}, since we prove in Proposition~\ref{prop:opt} that the decay $t^{-\frac 1 \alpha}$ obtained in \eqref{eq:mixing_fast} is essentially optimal in the context of Theorem~\ref{thm:main}.

Our derivation of the enhanced diffusion estimate \eqref{eq:enhance} has instead a spectral theoretic flavor, and leans on 
a criterion devised in \cite{WEI18} to estimate enhanced dissipation rates of shear flows given by Weierstrass functions.
Our proof in Section \ref{sec:enhanced} uses crucially the construction of the shear flow $u$ as a limit of piecewise linear functions.

\subsection{Notation}
For $r,s\geq -1$ we denote by $H^r_xH^s_y$ the standard Sobolev spaces on $\T^2$ with norms $\norm{\varphi}_{H^r_xH^s_y}=\norm{\ip{k}^r\ip{n}^s\widehat{\varphi}(k,n)}_{L^2}$, where $\ip{k}=(1+k^2)^{1/2}$ for $k\in\Z$. Whenever $r=s$ we simply write $H^s$. Furthermore, we note that by duality we may also compute
\begin{equation}
 \norm{\varphi}_{L^2_xH^{-1}_y}^2=\sum_{k\in\Z}\sup_{\psi\in H^1_y,\norm{\psi}_{H^1}\leq 1}\abs{\ip{(\mathcal{F}_{x\to k}\varphi)(k,y),\psi}_{L_y^2}}^2.
\end{equation}

\section{Construction of the shear flows}\label{sec:constr}
Let $u_0$ be a periodic tent function on $[-\pi,\pi]$
\begin{equation}
 u_0(y):=\begin{cases}
         1+\frac{1}{\pi}y, &y\in[-\pi,0],\\
         1-\frac{1}{\pi}y, &y\in[0,\pi].
        \end{cases}
\end{equation}
Next we inductively define periodic, continuous, piecewise linear functions $u_m$, $m\in\N$, as subsequent oscillations around $u_0$ as follows (see also Figure \ref{fig:um}):

\begin{figure}[h!]
  \includegraphics[scale=0.7]{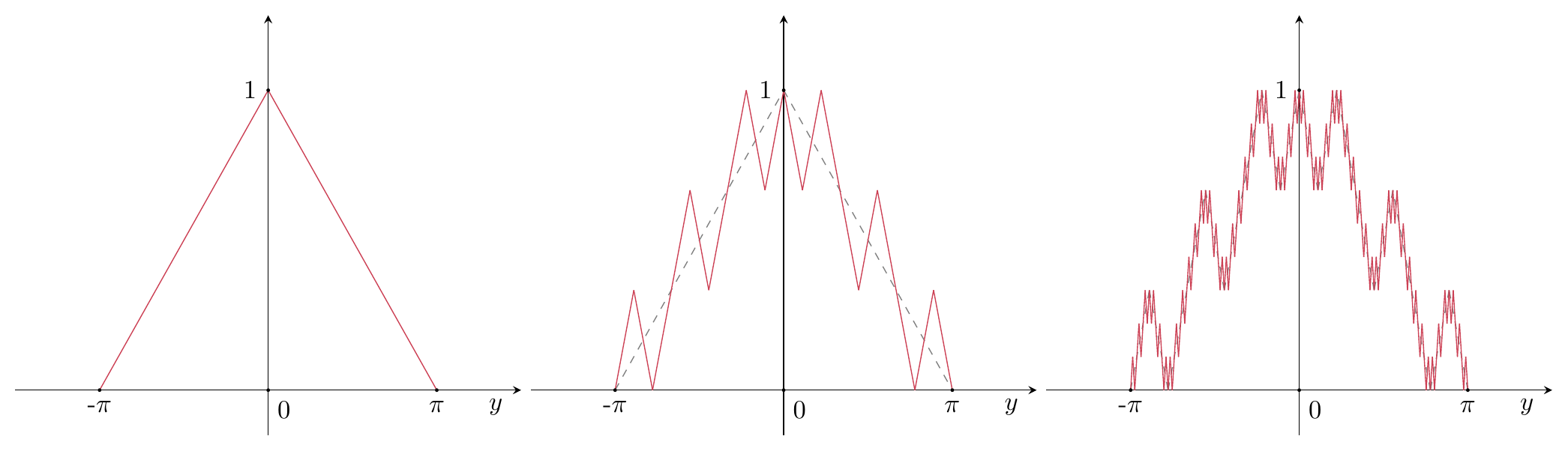}
  \caption{$u_0$, $u_1$ and $u_2$ for $p=q=3$.}
  \label{fig:um}
\end{figure}

Let \emph{odd} numbers $p,q\geq 3$ be given.\footnote{The choice of $p\geq 3$ is a matter of technical convenience -- we could also allow $p=2$ at the cost of some minor, but technical, modifications to some of the later proofs.} For $m\in\N$ we subdivide $[-\pi,\pi]$ into $N_m=2 p^mq^m$ intervals of equal length
\begin{equation}\label{eq:lm}
 \ell_m=\frac{2\pi}{N_m}=\frac{\pi}{p^mq^m},
\end{equation}
thus obtaining a grid
\begin{equation}
 y_m^j=-\pi +j \ell_m, \qquad j=0,\ldots, N_m.
\end{equation}
Now we inductively define the continuous, piecewise linear functions $u_m$ as follows. For some $m\in\N$, let $u_m$ be given satisfying that 
\begin{enumerate}
 \item $u_m$ is continuous and piecewise linear,
 \item $u_m$ is linear on each interval $I_m^j:=[y_m^j,y_m^{j+1}]$, $j=0,\ldots,N_m-1$.
\end{enumerate}
Now we subdivide each interval $I_m^j$ into $p$ subintervals $I_m^{j,k}:=[y_m^{j,k},y_m^{j,k+1}]$, where $y_m^{j,k}:=y_m^j+kh_{m+1}$, with $0\leq k\leq p-1$, each of length $h_{m+1}:=\ell_m p^{-1}=q\ell_{m+1}$. On each such subinterval $I_m^{j,k}$ we then define $u_{m+1}$ as piecewise linear, continuous and oscillating $q$ times between the values $u_m(y_m^{j,k})$ and $u_m(y_m^{j,k+1})$, i.e.\ we define
\begin{equation}\label{eq:defum}
 u_{m+1}(y):=\begin{cases}
 u_m(y_m^{j,k}), &y\in\{y_m^{j,k}+2a\ell_{m+1};\,a=0,1,\ldots,\frac{q-1}{2}\},\\
 u_m(y_m^{j,k+1}), &y\in\{y_m^{j,k}+(2a+1)\ell_{m+1};\,a=0,1,\ldots,\frac{q-1}{2}\},\\
 \textnormal{piecewise linear} &\textnormal{in between}.
 \end{cases}
\end{equation}
The functions $u_m$ so constructed have slope of size
\begin{align}\label{eq:umder}
 s_m:=|u'_m(y)|=\frac{1}{\pi} q^m, \qquad \forall y\in\T\setminus\{y_m^j;\,j=0,\ldots,N_m\}.
\end{align}
The following lemma proves that $u\in C^\beta$ if and only if $\beta \leq \alpha$. More precisely the sharpness of the H\"older exponent follows from the so-called lower-H\"older property (see \eqref{eq:sharpCalpha}).
\begin{lemma}\label{lem:lowerHolder}
The family of functions $\{u_m\}_{m\in\N}$ converges uniformly to a function $u\in C^{\alpha}(\T)$, where
\begin{align}\label{eq:alpha}
\alpha=\frac{\ln p}{\ln p+\ln q}.
\end{align}
Moreover, there exists a constant $C>0$ such that for any interval $I\subset \T$ of length 
$0<\abs{I}\leq 1$ 
\begin{equation}\label{eq:sharpCalpha}
 \sup_I u - \inf _I u \geq C\abs{I}^\alpha.
\end{equation}

\end{lemma}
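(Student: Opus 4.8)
The plan is to first establish uniform convergence and the upper H\"older bound by a Weierstrass-type telescoping argument, then deduce the lower H\"older property \eqref{eq:sharpCalpha} from the self-similar structure of the construction. For the convergence: on each interval $I_m^{j,k}$ the function $u_{m+1}$ oscillates between the two endpoint values of $u_m$, so $\norm{u_{m+1}-u_m}_{L^\infty} \le \mathrm{osc}_{I_m^{j,k}} u_m = s_m h_{m+1} = \frac{1}{\pi}q^m \cdot \ell_m p^{-1}$. Using $\ell_m = \pi (pq)^{-m}$ this is $\frac{1}{p^{m+1}}$, which is summable; hence $(u_m)$ is uniformly Cauchy and converges to some continuous $u$. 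For the H\"older norm, I would estimate $|u(y)-u(y')|$ for $|y-y'|$ comparable to $\ell_m$ for a suitable $m$: write $u = u_m + \sum_{k\ge m}(u_{k+1}-u_k)$; the tail contributes $\lesssim \sum_{k\ge m} p^{-(k+1)} \lesssim p^{-m}$, and the smooth part contributes $|u_m(y)-u_m(y')| \le s_m |y-y'| \lesssim \frac{1}{\pi}q^m \ell_m = \pi p^{-m}$. Balancing, $|u(y)-u(y')| \lesssim p^{-m} \sim \ell_m^{\alpha} \sim |y-y'|^\alpha$ with $\alpha$ determined by $p^{-m} = (\pi(pq)^{-m})^\alpha$, i.e.\ $\alpha = \frac{\ln p}{\ln p + \ln q}$ as in \eqref{eq:alpha}. (For a general interval $|I|\le 1$ one picks $m$ with $\ell_{m+1} < |I| \le \ell_m$ and runs the same two-sided estimate.)

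The heart of the matter is the lower bound \eqref{eq:sharpCalpha}. Given $I$ with $|I|\le 1$, choose $m$ so that $\ell_{m+1} < |I| \le \ell_m$; then $I$ contains at least one full grid interval $I_{m+1}^{\ell}$ of the $(m+1)$-st generation, or at least a sizeable fraction of a consecutive pair of them (here is where $p,q\ge 3$, and their oddness, buys room). The key structural fact is that on any interval $I_m^{j,k}$ of length $h_{m+1}$, by construction $u_{m+1}$ attains \emph{both} the value $u_m(y_m^{j,k})$ and the value $u_m(y_m^{j,k+1})$, and these differ by exactly $s_m h_{m+1} = \pi p^{-m}$ (since $u_m$ has constant slope magnitude $s_m$); moreover the later perturbations $u_{k+1}-u_k$, $k\ge m+1$, are each bounded by $p^{-(k+1)}$ in $L^\infty$, so they cannot destroy a gap of size $\pi p^{-m}$ provided $p$ is large enough that $\sum_{k\ge m+1} p^{-(k+1)} = \frac{p^{-(m+1)}}{p-1} < \tfrac12 \pi p^{-m}$, which holds for all $p\ge 3$. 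Hence $\sup_I u - \inf_I u \ge \pi p^{-m} - 2\sum_{k\ge m+1}p^{-(k+1)} \ge c\, p^{-m} \ge c\, \ell_m^\alpha \ge c\,|I|^\alpha$, using $\ell_m = q\,\ell_{m+1} < q|I|$ and $p^{-m} \sim \ell_m^\alpha$.

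The main obstacle I anticipate is bookkeeping the case where $I$ straddles a grid point and is not contained in any single interval of generation $m$ or $m+1$: one must argue that $I$ still contains a sub-interval on which the oscillation of some $u_{k}$ is bounded below by $c\,|I|^\alpha$. The fix is to choose the generation $m$ so that $|I|$ is comparable to $\ell_m$ (not merely $\le \ell_m$), so that $I$ contains at least one complete sub-interval of the $(m+1)$-st subdivision on which $u_{m+1}$ realizes a full gap $s_m h_{m+1}$ as above; the oddness of $q$ (forcing the oscillation pattern to return to the starting value) and $p,q\ge3$ guarantee enough such sub-intervals sit inside $I$. A secondary technical point is ensuring the later-generation perturbations genuinely cannot cancel the gap on that specific sub-interval — this is immediate from the crude $L^\infty$ tail bound $\sum_{k\ge m+1}\norm{u_{k+1}-u_k}_{L^\infty} \le \frac{p^{-(m+1)}}{p-1}$, which is a small fraction of $\pi p^{-m}$; this is precisely the cancellation/control property specific to the construction that the authors allude to. Everything else is routine geometric-series manipulation together with the identity $\alpha = \frac{\ln p}{\ln p + \ln q}$, equivalently $p^{-1} = (pq)^{-\alpha}$.
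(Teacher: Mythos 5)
Your proof of uniform convergence and of the upper H\"older estimate matches the paper's argument in all essentials (telescoping $\|u_{m+1}-u_m\|_{L^\infty}\le p^{-(m+1)}$, split $u=u_m+\mathrm{tail}$, balance at scale $\ell_m$); there is nothing to add there.

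For the lower bound \eqref{eq:sharpCalpha}, however, there is a genuine gap. You bound the effect of the later generations by the crude $L^\infty$ estimate $\sum_{k\geq m+1}\|u_{k+1}-u_k\|_{L^\infty}\leq \frac{p^{-(m+1)}}{p-1}$ and subtract twice this tail from the oscillation of $u_{m+1}$ on a subinterval. Two problems arise. First, there is an arithmetic slip: the oscillation on a subinterval $I_m^{j,k}$ of length $h_{m+1}=\ell_m/p$ is $s_mh_{m+1}=p^{-(m+1)}$, not $\pi p^{-m}$ (you in fact computed $s_mh_{m+1}=p^{-(m+1)}$ correctly in the convergence step). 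Second, with the corrected gap the comparison you need is $2\cdot\frac{p^{-(m+1)}}{p-1}<p^{-(m+1)}$, i.e.\ $p>3$, which is \emph{false} in the borderline case $p=3$ — exactly the smallest value allowed in the construction. (Using a full generation-$m$ interval instead, with gap $s_m\ell_m=p^{-m}$ and tail $\|u-u_m\|_{L^\infty}\leq\frac{p^{-m}}{p-1}$, runs into the same $p>3$ obstruction.) So the $L^\infty$-perturbation route does not close for $p=3$.

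The paper sidesteps this entirely with a sharper structural observation that your sketch does not use: by the definition \eqref{eq:defum}, passing from $u_m$ to $u_{m+1}$ \emph{leaves the values at all generation-$m$ grid points unchanged}, hence by induction $u(y_m^j)=u_m(y_m^j)$ for every $j$ and $m$. Consequently, once $m$ is chosen so that $\ell_{m-1}\leq|I|<\ell_{m-2}$ (guaranteeing $I\supset[y_m^j,y_m^{j+1}]$ for some $j$), the oscillation of $u$ on $I$ is at least $|u(y_m^j)-u(y_m^{j+1})|=s_m\ell_m=p^{-m}$ \emph{exactly}, with no loss from the tail at all. This works for all $p\geq 3$ (indeed for $p=2$ too) and yields the stated constant $C=\pi^{-\alpha}p^{-2}$. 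You should replace the $L^\infty$-tail argument by this exact coincidence of $u$ with $u_m$ at the generation-$m$ grid points; everything else in your proposal then goes through.
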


\begin{remark}
 We note that the set of $\alpha\in(0,1)$ as in \eqref{eq:alpha} is dense in $(0,1)$ because it contains $\mathbb Q$: Given $a<b\in\N$, we choose   $p=k^a$, $q=k^{b-a}$ for some $k\in\N$ odd to obtain that $\frac{\ln p}{\ln p+\ln q}=\frac{a}{b}$.
\end{remark}

\begin{proof} By construction, for each $m\in \N$ and $y\in \T$ we have that
\begin{align}
|u_m(y)-u_{m+1}(y)|\leq \frac{1}{p^{m+1}}.
\end{align}
Hence,
for any $k\in \N$,
\begin{align}
\|u_m-u_{m+k}\|_{L^\infty}\leq \sum_{\ell=0}^{k-1} \|u_{m+\ell}-u_{m+\ell+1}\|_{L^\infty}\leq \sum_{\ell=0}^{k-1} \frac{1}{p^{m+\ell+1}}\leq
\frac{1 }{(p-1)p^{m}}.
\end{align}
Consequently, $\{u_m\}_{m\in\N}$ is uniformly Cauchy and converges to a continuous function $u$ and
\begin{align}\label{eq:convest}
\|u_m-u\|_{L^\infty}\leq\frac{1 }{(p-1)p^{m}}.
\end{align}
To show that $u$ is H\"older-continuous,
let us consider two points $y,\bar{y}\in \T$, and take $m\geq 0$ such that
\begin{align}
2\ell_{m+1}\leq |y-\bar{y}|\leq 2\ell_m.
\end{align}
Thanks to \eqref{eq:convest} and to \eqref{eq:umder}, we have that
\begin{align}\label{eq:holder2}
|u(y)-u(\bar{y})|\leq |u_m(y)-u_{m}(\bar{y})|+\frac{2 }{(p-1)p^{m}}\leq 2s_m\ell_m+\frac{2}{(p-1)p^{m}}\leq \frac{2 p}{(p-1)p^{m}}.
\end{align}
Thanks to our choice of $\alpha$ in \eqref{eq:alpha}, we have that $(pq)^\alpha=p$, namely $\frac{1}{p^{m}}
=\frac{p}{2^\alpha\pi^\alpha}\left(2\ell_{m+1}\right)^\alpha
$ and from \eqref{eq:holder2} we deduce our claimed estimate
\begin{align}
|u(y)-u(\bar{y})|\leq \frac{2^{1-\alpha} p^2}{\pi^\alpha(p-1)}\left(2\ell_{m+1}\right)^\alpha \leq   \frac{2^{1-\alpha} p^2}{\pi^\alpha(p-1)}|y-\bar{y}|^\alpha.
\end{align}
To see the sharp H\"older property \eqref{eq:sharpCalpha}, let $I\subset\T$ be given. 
 Now let $m\in\N$ such that $\ell_{m-1}\leq\abs{I}<\ell_{m-2}$. Then by construction there exists $j\in\{0,...,N_m\}$ such that $[y_m^j,y_m^{j+1}]\subset I$. Now we note that $u(y_m^j)=u_m(y_m^j)$ and $u(y_m^{j+1})=u_m(y_m^{j+1})$, so by \eqref{eq:umder} it follows that
\begin{equation}
 \abs{u(y_m^j)-u(y_m^{j+1})}=s_m\ell_m=\frac{1}{p^m} =\frac{1}{p^3} \frac 1 {p^{m-3}} =\frac{1}{\pi^\alpha p^2} \ell_{m-2}^\alpha \geq \frac{1}{\pi^\alpha p^2}  |I|^\alpha
\end{equation}
where we used \eqref{eq:umder}, \eqref{eq:lm} and again that $p=(pq)^\alpha$.
\end{proof}

\section{Inviscid mixing properties}\label{sec:inviscidmixing}
Our main goal in this section is to establish the following result:
\begin{proposition}\label{prop:inv_decay}
 Let $p,q, \alpha, u$ as in Section~\ref{sec:constr}. Then there exists $C:= C(p,q)>0$ such that the following hold:
 \begin{enumerate}
  \item\label{it:1} 
  If $f$ solves
 \begin{equation}\label{eq:pass_scal}
  \de_t f+u\de_x f=0,\qquad f(0)=f^{in},\qquad \int_\T f^{in}(x,y)\dd x=0,
 \end{equation}
 then
 \begin{equation}\label{eq:inv_decay_gen}
  \norm{f(t)}_{L^2_xH^{-1}_y}\leq Ct^{-1}\norm{f^{in}}_{H^{-1}_xH^1_y}.
 \end{equation}
 \item\label{it:2} This decay is sharp, in the sense that {there exist initial data $f^{in}_\star$ with $\int f^{in}_\star\dd x=0$} such that the corresponding solution $f_\star(t)$ of \eqref{eq:pass_scal} satisfies {for $t'_m=\pi p^m$, $m\in\N$, that}
 \begin{equation}\label{eq:inv_decay_lobd}
  \norm{f_\star(t'_m)}_{L^2_xH^{-1}_y}\geq C(t'_m)^{-1}\norm{f^{in}_\star}_{L^2_xH^1_y}.
 \end{equation}

 \item\label{it:3} However, the decay along a sequence of times may be significantly faster than this: We have that for $t_m=2\pi p^m$, $m\in\N$, any solution of \eqref{eq:pass_scal} satisfies
 \begin{equation}\label{eq:inv_decay_fast_m}
  \norm{f(t_m)}_{L^2_xH^{-1}_y}\leq Ct_m^{-\frac{1}{\alpha}}\norm{f^{in}}_{L^2_xH^1_y}.
 \end{equation}
 \end{enumerate}
\end{proposition}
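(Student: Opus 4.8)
The overarching strategy is to exploit the shear structure: writing $f_k(t,y) = e^{-iktu(y)}\widehat{f^{in}}(k,y)$ for the $k$-th Fourier mode in $x$, the $L^2_xH^{-1}_y$ norm of $f(t)$ is controlled by oscillatory integrals of the form $\int_\T e^{-iktu(y)}\varphi(y)\,\dd y$ tested against $\psi\in H^1_y$, using the duality characterization of $H^{-1}_y$ recorded in the Notation subsection. The whole proposition thus reduces to a family of stationary-phase type estimates for the phase $tu(y)$, where the difficulty is that $u$ is merely $C^\alpha$ and one cannot integrate by parts directly. The key device, indicated in the introduction, is to integrate by parts instead against the piecewise-linear approximants $u_m$, which have constant slope $s_m = \pi^{-1}q^m$ off the grid, and to control the error $u - u_m$ using \eqref{eq:convest}, i.e.\ $\|u-u_m\|_{L^\infty}\le (p-1)^{-1}p^{-m}$.

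For part \eqref{it:1}: for a fixed $t>0$ and $k\ne 0$, choose the scale $m = m(|k|t)$ so that $s_m |k| t \sim |k|t\, q^m$ is comparable to $\ell_m^{-1}$ up to a bounded factor — roughly, $m$ chosen so that $(|k|t)\cdot q^m \sim (p^mq^m)$, which is possible (with bounded overshoot) since $q \ge 3$ so consecutive scales differ by a bounded ratio. On each grid interval $I_m^j$, $u_m$ is affine with slope $\pm s_m$, so $\int_{I_m^j} e^{-ikt u_m(y)}\psi(y)\,\dd y$ can be integrated by parts once, gaining a factor $(s_m|k|t)^{-1}$ and producing boundary terms plus an integral of $\psi'$; summing the boundary terms telescopes or is controlled by $\|\psi'\|_{L^1}\lesssim \|\psi\|_{H^1}$, and the interior term is $\lesssim (s_m|k|t)^{-1}\|\psi'\|_{L^1}$. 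Replacing $e^{-iktu}$ by $e^{-iktu_m}$ costs $|kt|\|u-u_m\|_{L^\infty}\lesssim |k|t\,p^{-m}$; by the choice of $m$ this is $\lesssim 1/(|k|t)$ up to constants (here one uses the relation $(pq)^\alpha = p$, i.e.\ $p^{-m} = (\ell_m/\pi)^\alpha$, to convert between the two scales). Collecting terms yields $|\langle f_k(t),\psi\rangle_{L^2_y}| \lesssim (|k|t)^{-1}\|\widehat{f^{in}}(k,\cdot)\|_{H^1_y}$, and summing the squares over $k$ with the $\ip{k}^{-1}$ from $H^{-1}_x$ gives \eqref{eq:inv_decay_gen}. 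The main obstacle here is the bookkeeping of the boundary terms across the $N_m$ grid points: one must check that the microstructure of $u_m$ built in \eqref{eq:defum} makes these sum nicely — this is precisely the ``cancellation property specific to our construction'' flagged in the introduction, and I expect invoking Lemma \ref{lem:Ssplit} to be the crux.

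For part \eqref{it:3}, the faster decay at $t_m = 2\pi p^m$: at this special time the phase $t_m u_m$ becomes, on each subinterval, an \emph{integer} multiple of $2\pi$ times an affine function — indeed $t_m s_m \ell_m = 2\pi p^m\cdot \pi^{-1}q^m \cdot \pi(p^mq^m)^{-1}\cdot$(something) $\in 2\pi\Z$ up to the careful normalization — so $e^{-ik t_m u_m}$ oscillates an exact integer number of full periods over each grid cell, producing an \emph{exact} cancellation (or at least one that survives $q$ further integrations by parts' worth of smallness) rather than the one-step gain of part \eqref{it:1}. The error $u - u_m$ at $t = t_m$ is then the bottleneck term, of size $t_m p^{-m} \sim (\text{const})$, which forces the scale $m$ here to be tied to $t$ via $t_m = 2\pi p^m$ exactly; substituting $p^m = t_m/(2\pi)$ and $q^m = (p^m)^{(1-\alpha)/\alpha}$ (from $(pq)^\alpha = p$) turns the residual into a power $t_m^{-1/\alpha}$, giving \eqref{eq:inv_decay_fast_m}. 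For part \eqref{it:2}, the sharpness, I would take $f^{in}_\star$ concentrated in a single $x$-mode $k = 1$ with $y$-profile adapted to a single grid interval at scale $m$ (e.g.\ $\widehat{f^{in}_\star}(1,y)$ a smooth bump on $I_m^{0}$), so that at $t'_m = \pi p^m$ the oscillatory integral does \emph{not} enjoy the exact cancellation of part \eqref{it:3} (the phase is a half-integer multiple of periods, not an integer one), and one computes an explicit lower bound $\gtrsim (t'_m)^{-1}$ by evaluating the leading stationary-phase contribution; the $H^1_y$ norm of $f^{in}_\star$ is then $O(1)$ by the choice of bump, matching \eqref{eq:inv_decay_lobd}. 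The principal technical difficulty throughout is making the ``integrate against $u_m$, then estimate the swap error'' scheme quantitatively sharp enough that the two different time sequences $t_m = 2\pi p^m$ and $t'_m = \pi p^m$ genuinely separate the rates $t^{-1/\alpha}$ and $t^{-1}$ — which hinges on the exact arithmetic of $t s_m \ell_m$ modulo $2\pi$ and hence on the self-similar combinatorics of the construction in Section \ref{sec:constr}.
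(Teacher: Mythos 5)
Your plan correctly identifies the overall reduction to oscillatory integrals via the Fourier decomposition and duality, and you are right that Lemma~\ref{lem:Ssplit} is the key to controlling the boundary terms in part~\eqref{it:1}. However, there are three concrete gaps that would prevent this proposal from yielding the proposition.

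First, for part~\eqref{it:1}, your scheme of choosing $m=m(|k|t)$ and then bounding the swap error $|k|t\,\|u-u_m\|_{L^\infty}\lesssim |k|t\,p^{-m}$ does not close: balancing $s_m|k|t\sim\ell_m^{-1}$ gives $|k|t\sim p^m$, hence the swap error is $O(1)$, not $O(1/(|k|t))$; pushing $m$ larger to kill the error makes the number of boundary terms $N_m$ grow without a corresponding gain. The paper avoids this altogether by proving $\big|\int_\T\e^{itu_m}\varphi\big|\leq Ct^{-1}\|\varphi\|_{W^{1,1}}$ \emph{uniformly in $m$} (Lemma~\ref{lem:inv_decay}) via an induction in $m$ that exploits the self-similar structure (the $S_1$--contribution rescales to $\frac1q\sum_d T_{m-1}(\varphi\circ\tau_{-d\ell_m})$, the $S_2$--contribution is geometrically small by pairing up grid points via property (P3)), and then simply sends $m\to\infty$ using $\|u-u_m\|_{L^\infty}\to0$. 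No $m$--$t$ balancing is needed.

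Second, for part~\eqref{it:3} your proposal leaves the error $u-u_m$ as a bottleneck ``of size $t_mp^{-m}\sim\text{const}$,'' which is an $O(1)$ quantity and cannot by itself produce the extra decay. The mechanism that actually controls it is that $u_{m'}-u_m$ is $\ell_m p^{-1}$-periodic on each $I_m^j$ for $m'\geq m$, and that at $t_m=2\pi p^m$ the phase $t_mu_m$ increments by exactly $2\pi$ over each $I_m^j$, whence both $\int_{I_m^j}\e^{it_mu_m}\dd y=0$ and $\int_{I_m^j}\e^{it_mu}\dd y=0$ (the identity~\eqref{eqn:meanzerof} in the paper). This lets one subtract $\varphi(y_m^{j-1})$ inside the error term and gain a full factor $\ell_m\sim t_m^{-1/\alpha}$ via a one-dimensional Poincar\'e bound — a cancellation for the \emph{error}, not merely for the main term, and this is what your plan is missing.

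Third, for part~\eqref{it:2} your proposed initial data (a bump adapted to a grid interval at scale $m$) would change with $m$, whereas the statement requires a single $f^{in}_\star$ working along the entire sequence $t'_m$. The paper takes the fixed datum $f^{in}_\star=\cos(x)$ (so $(f^{in}_\star)_{\pm1}\equiv1$) and reduces the lower bound to $\big|\int_\T\e^{it'_mu(y)}\dd y\big|\gtrsim(t'_m)^{-1}$, which is proved by showing that $\int_{I_m^j}\e^{it'_mu}\dd y=\int_{I_m^j}\e^{it'_mu_m}\dd y$ (again by the periodicity structure) and then counting, via the sign pattern of $u_m'$, that the sum over grid cells has a net contribution of size $q^m\cdot(pq)^{-m}=p^{-m}$ rather than cancelling.
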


Parts \eqref{it:1} and \eqref{it:2} show that the shears induced by the functions $u$ of Section \ref{sec:constr} in general only mix at the same rate as smooth shears without critical points {(in a periodic channel or strip), and only slightly faster than sinusoidal profiles, whose mixing rate is proportional to $t^{-1/2}$, see \cite{BCZ17}.} Part \eqref{it:3} shows that along a sequence of times this may be much faster. We highlight here that the particular sequence of times does not depend on the choice of initial data, but is constructed based on the structure of $u$. 

The idea of the proof of these results is well known: oscillations lead to cancellation, which can typically be seen via an integration by parts. While $u$ is certainly constructed to have plenty of oscillations, here we cannot directly exploit them via integration by parts on $u$, since $u\in C^\alpha$ is not regular enough. Naturally, though, the approximating piecewise linear functions $u_m$ have increasingly steep slopes (as $m\to\infty$). However, $u_m'$ is only linear on increasingly many (namely, $N_m$) increasingly small (of size $N_m^{-1}$) intervals and changes sign frequently, so a delicate analysis is needed to show that this actually leads to \emph{uniform in $m$} bounds for the rate of mixing of $u_m$ (see also Lemma \ref{lem:inv_decay}). This gives \eqref{eq:inv_decay_gen}, and similar ideas can be used to establish \eqref{eq:inv_decay_lobd} and \eqref{eq:inv_decay_fast_m}.
After a discussion of the relevant properties of $u_m$ in Section \ref{ssec:prelim_inv}, we then give the details of these proofs in Section \ref{ssec:pf_inv} below.

\subsection*{On the sharpness of the decay} It is worth noting that by extending heuristics of the smooth case (see also our discussion in the introduction), the faster rate $t^{-\frac{1}{\alpha}}$ may be regarded as a natural scale for inviscid mixing of a $C^\alpha$ shear. Moreover, in a certain sense it is also the fastest rate possible, as a simple computation shows: 
\begin{proposition}\label{prop:opt}
 Let $p,q, \alpha, u$ as in Section~\ref{sec:constr} and let $f^{in}\in W^{1,2}(\T^2)$. If $f$ solves \eqref{eq:pass_scal},
  then for every $\alpha' <\alpha$ there exists a constant $\bar{C}>0$ such that
 \begin{equation}\label{ts:opt}
  \norm{f(t)}_{H^{-1}(\T^2)}\geq \bar{C} t^{-\frac{1}{\alpha'}} \qquad \mbox{for every } t\geq 1.
 \end{equation}
\end{proposition}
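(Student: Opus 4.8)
The statement asserts a lower bound valid for \emph{every} $f^{in}\in W^{1,2}(\T^2)$, so the argument cannot exploit any special structure of the datum; instead it must show that the shear \eqref{eq:pass_scal} simply cannot mix faster than the rate $t^{-1/\alpha'}$ dictated by the (lower) H\"older regularity of $u$. The plan is to reduce to a single non-trivial $x$-frequency and then run a general upper bound on the mixing rate of an arbitrary profile under the flow $e^{-itku(y)}$, which follows from the sharp lower-H\"older property \eqref{eq:sharpCalpha} of $u$ (Lemma~\ref{lem:lowerHolder}). Concretely, since the equation decouples in the $x$-frequency $k$, write $f^{in}=\sum_{k\neq 0} e^{ikx} g_k(y)$ with $\sum_k \jap{k}^2\norm{g_k}_{W^{1,2}_y}^2<\infty$, and $f(t)=\sum_k e^{ikx}e^{-itku(y)}g_k(y)$. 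Fix any $k_0\neq 0$ with $g_{k_0}\not\equiv 0$; then $\norm{f(t)}_{H^{-1}(\T^2)}^2 \geq \jap{k_0}^{-2}\norm{e^{-itk_0 u(y)}g_{k_0}(y)}_{H^{-1}_y}^2$, and it suffices to bound this single mode from below by $c\,t^{-2/\alpha'}$.

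The core is therefore the following elementary fact: if $v\in C^\alpha(\T)$ satisfies the lower-H\"older bound $\sup_I v-\inf_I v\geq C|I|^\alpha$ for all small intervals $I$, and $g\in W^{1,2}(\T)$ is fixed with $g\not\equiv 0$, then $\norm{e^{i\lambda v}g}_{H^{-1}(\T)}\gtrsim \lambda^{-1/\alpha'}$ for all large $\lambda$ and every $\alpha'<\alpha$. I would prove this by the duality characterization of $\norm{\cdot}_{L^2_xH^{-1}_y}$ recorded in the Notation subsection: test $e^{i\lambda v}g$ against a well-chosen $\psi\in H^1_y$ with $\norm{\psi}_{H^1}\le 1$. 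The natural choice is $\psi=\phi\,\overline{g}\,\overline{e^{i\lambda v}}$ suitably truncated and normalized, which would formally give $\int |g|^2\phi$, but $\overline{e^{i\lambda v}}$ is not in $H^1$. Instead, the cleaner route is to localize: pick a small interval $J_\lambda$ of length $\lambda^{-1/\alpha}$ (times a constant) on which $|g|$ is bounded below in $L^2$-average — such $J_\lambda$ exists since $g\in W^{1,2}\subset C^0$ is nonzero, hence bounded away from $0$ on some fixed interval $J_0$, and we may take $J_\lambda\subset J_0$ — and on $J_\lambda$ the phase $\lambda v$ varies by at most $\lambda\cdot C|J_\lambda|^\alpha\sim 1$, by the \emph{upper} H\"older bound $|v(y)-v(\bar y)|\le C|y-\bar y|^\alpha$ (also from Lemma~\ref{lem:lowerHolder}). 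Thus on $J_\lambda$ the function $e^{i\lambda v}g$ has no cancellation: $|\int_{J_\lambda} e^{i\lambda v} g\, \psi|\gtrsim |J_\lambda|^{1/2}\|g\|_{L^2(J_0)}$ for the test function $\psi=|J_\lambda|^{-1/2}\mathbf 1_{J_\lambda}$ mollified at scale $|J_\lambda|$, whose $H^1_y$ norm is $\lesssim |J_\lambda|^{-1}\cdot|J_\lambda|^{1/2}=|J_\lambda|^{-1/2}$. Renormalizing $\psi$ by this factor and plugging into the duality formula yields $\norm{e^{i\lambda v}g}_{H^{-1}_y}\gtrsim |J_\lambda|\sim \lambda^{-1/\alpha}$. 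Finally, replacing $\alpha$ by any $\alpha'<\alpha$ absorbs the implicit constants uniformly for $\lambda\ge 1$ (i.e.\ $\lambda^{-1/\alpha}\ge c\lambda^{-1/\alpha'}$ with $c$ independent of $\lambda\ge1$ only after noting the constant depends on $g$, which is fixed), giving $\norm{f(t)}_{H^{-1}(\T^2)}\ge \bar C t^{-1/\alpha'}$ for $t\ge 1$ with $\bar C=\bar C(p,q,f^{in})>0$ since $\lambda=|k_0|t$.

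\textbf{Main obstacle.} The delicate point is the treatment of the test function near $\partial J_\lambda$: the sharp indicator $\mathbf 1_{J_\lambda}$ is not admissible in $H^1_y$, and mollifying it at scale $\sim|J_\lambda|$ costs exactly the factor $|J_\lambda|^{-1}$ in the $H^1$ norm that we must track carefully — getting the bookkeeping right so that the final power of $|J_\lambda|$ (hence of $\lambda$, hence of $t$) is $-1/\alpha$ and not something worse is the heart of the estimate. A secondary subtlety is ensuring the constant is genuinely \emph{uniform for $t\ge1$}: the constant $\bar C$ is allowed to depend on $f^{in}$ (through the fixed mode $g_{k_0}$ and the interval $J_0$ on which it is bounded below), on $p,q$, and on $\alpha'$, but not on $t$; this is why the statement restricts to $t\ge 1$ and to $\alpha'<\alpha$, which gives the slack needed to swallow all $\lambda$-independent constants. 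One should also check that $J_\lambda\subset J_0$ is possible for all $\lambda\ge1$, which holds once $|J_0|\ge c$ and $|J_\lambda|\to0$, shrinking $c$ if necessary.
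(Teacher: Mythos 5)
There is a genuine gap, and it sits exactly at the point you flagged as the ``main obstacle'': the $H^1$ norm of your test function is miscomputed, and the error changes the final exponent. Write $L=|J_\lambda|$ and let $\psi = L^{-1/2}\chi\big((\cdot-y_0)/L\big)$ be your mollified, $L^2$-normalized indicator. Then $\psi' = L^{-3/2}\chi'\big((\cdot-y_0)/L\big)$, so $\|\psi'\|_{L^2}^2 = L^{-3}\cdot L\,\|\chi'\|_{L^2}^2 \sim L^{-2}$, i.e.\ $\|\psi\|_{H^1}\sim L^{-1}$, not $L^{-1/2}$ as you claim. The duality quotient is therefore $|\langle e^{i\lambda v}g,\psi\rangle|/\|\psi\|_{H^1}\gtrsim \delta L^{1/2}\cdot L = \delta L^{3/2}\sim \lambda^{-3/(2\alpha)}$, not $\lambda^{-1/\alpha}$. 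This is not a fixable bookkeeping slip within your setup: for \emph{any} $\psi$ supported on an interval of length $L$ one has $|\langle F,\psi\rangle|\le \|F\|_{L^\infty}L^{1/2}\|\psi\|_{L^2}\lesssim \|F\|_{L^\infty}L^{3/2}\|\psi\|_{H^1}$ by Poincar\'e, so a single-interval test function can never yield more than $L^{3/2}$. With $\lambda=|k_0|t$ your argument thus proves $\norm{f(t)}_{H^{-1}}\gtrsim t^{-3/(2\alpha)}$, which gives \eqref{ts:opt} only for $\alpha'\le 2\alpha/3$, not for all $\alpha'<\alpha$. The idea can be rescued, but only by going global again: tile $\T$ by $\sim L^{-1}$ disjoint intervals of length $L$, put a unit-height bump on each, and multiply each bump by a unimodular constant matching the (locally almost constant) phase of $e^{i\lambda v}g$ there; then $\|\psi\|_{H^1}\sim L^{-1}$ while $\langle F,\psi\rangle\gtrsim \|g\|_{L^1}$, which does give $\gtrsim L\sim\lambda^{-1/\alpha}$. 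That phase-matching over all intervals (and the control of the oscillation of $g$ at scale $L$) is an additional step your write-up does not contain.

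For comparison, the paper avoids duality and localization entirely. It represents $f(t)=f^{in}\circ X_t$ with $X_t(x,y)=(x-tu(y),y)$, notes $\|X_t\|_{C^\alpha}\le Ct$, and uses the Hajlasz inequality (via the maximal function of $Df^{in}$) together with measure preservation of $X_t$ to bound the Gagliardo seminorm $\|f(t)\|_{W^{\alpha',2}}\lesssim \|f^{in}\|_{H^1}(1+t)$ for every $\alpha'<\alpha$; the conclusion then follows from conservation of $\|f\|_{L^2}$ and the interpolation $\|f\|_{L^2}\le \|f\|_{H^{-1}}^{\alpha'/(1+\alpha')}\|f\|_{H^{\alpha'}}^{1/(1+\alpha')}$. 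This route is where the restriction $\alpha'<\alpha$ is genuinely used (integrability of the kernel $|z-z'|^{-2+2(\alpha-\alpha')}$), whereas in your argument the strict inequality is only invoked to absorb constants; that asymmetry is another sign that your version is proving a different (weaker) estimate.
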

We note, however, that the constant $\bar{C}$ in \eqref{ts:opt} may depend on $f^{in}$ -- see \eqref{eq:alpha'lowbd} for the details. {Concerning the case of $\alpha'=\alpha$, in the smooth setting such a lower bound { may be false in general} due to orthogonality and instead requires some mild additional decay \cite{Zil19}.}
\begin{proof}
Let $X_t(x,y)= (x-tu(y), y):[0,\infty) \times \T^2 \to \T^2$ be the flow of the stationary vector field $(-u(y),0)$. Observe that $\| X_t\|_{C^\alpha} \leq C t$ for every $t \geq 1$, where $C$ depends only on $\|u\|_{C^\alpha}$. The solution $f(t)$ can be represented as 
$f(t,x,y) = f^{in}(X_t(x,y))$. Hence by the Hajlasz inequality \cite{Stein70}
$$ |f(t,z)-f(t,z')| \leq \big( |M{(}Df^{in}{)}|(X_t(z))+ |M(Df^{in})|(X_t(z'))\big) | X_t(z)-X_t(z')| \leq C (1+t) |z-z'|^\alpha,$$
{where $M$ is the maximal function.}
Hence, for any $\alpha'<\alpha$, $t\geq 1$ by a direct computation we can estimate the Gagliardo seminorm of $f(t)$ \begin{equation}
\begin{split}
\|f(t)\|_{W^{\alpha', 2}}^2 &= \int_{\T^2}\int_{\T^2} \frac{|f(t,z)-f(t,z')|^2}{|z-z'|^{2+2\alpha'}} \dd z\dd z' \\
&\leq\int_{\T^2}\int_{\T^2} \frac{| |M(Df^{in})|^2(X_t(z))+ |M(Df^{in})|^2(X_t(z'))|^2}{|z-z'|^{2+2(\alpha'-\alpha)}} \dd z\dd z'\leq C \|f^{in}\|^2_{H^{1}} {(1+t^2)},
\end{split}
\end{equation}
where in the last inequality we used that the kernel $|z-z'|^{-2-2(\alpha'-\alpha)}$ is integrable, that $X_t$ is measure preserving and that the maximal function maps $L^2$ to $L^2$ with a linear estimate. The constant $C$ depends on $\alpha'$ and on $\|u\|_{C^\alpha}$.
By interpolation
\begin{equation}\label{eq:alpha'lowbd}
\|f^{in}\|_{L^2}^2= \|f(t)\|_{L^2}^2 \leq  \|f(t)\|_{H^{-1}}^{\frac{2\alpha'}{1+\alpha'}}   \|f(t)\|_{H^{\alpha'}}^{\frac{2}{1+\alpha'}} \leq C \|f(t)\|_{H^{-1}}^{\frac{2\alpha'}{1+\alpha'}}   ( \|f^{in}\|_{H^{1}} t)^{\frac{2}{1+\alpha'}},
\end{equation}
which gives \eqref{ts:opt} and concludes the proof.
\end{proof}

\subsection{Preliminaries}\label{ssec:prelim_inv}
We record here some more useful properties of our construction of $u_m$. Let $S(m):=\{y_m^j;\,j=0,\ldots N_m\}$ be the set of grid points at stage $m$, and define
\begin{equation}
 S_0(m):=\{y\in S(m);\,\exists j\in\{0,\ldots,N_{m-1}\}, k\in\{0,\ldots,p-1\}:\,y=y_{m-1}^{j,k}\},
\end{equation}
i.e.\ $S_0(m)$ is the set of grid points at which $u_m$ does not change slope.
We may then further separate the grid points $S(m)\setminus S_0(m)$ into a set $S_1(m)$ that are ``close to'' grid points of the previous stage $m-1$ and a remainder set $S_2(m)$ of new, ``interior'' grid points. The latter $S_2(m)$ can then be further subdivided into $q$ sets {$A_k(m)$ ($0\leq k\leq q-1$)} of grid points, with the property that intervals of length $(q+1)\ell_m$ and starting point in $A_k(m)$ do not have any overlaps. The details are as follows.
\begin{lemma}\label{lem:Ssplit}
 Let
 { \begin{equation}
  D:=\{-(q-1),-(q-3),\ldots,0,\ldots,q-3,q-1\},
 \end{equation}
 and define the disjoint sets $S_1(m), S_2(m)$ as
 \begin{equation}\label{eq:defS12}
 \begin{aligned}
  S_1(m)&:=\bigcup_{d\in D}\tau_{d\ell_m}(S(m-1)),\\
  S_2(m)&:=S(m)\setminus(S_0(m)\cup S_1(m)),
 \end{aligned} 
 \end{equation}
 where $\tau_c$ denotes the translation operator by $c\in \T$.}
 Then clearly $S(m)=S_0(m)\cup S_1(m)\cup S_2(m)$ and moreover
\begin{enumerate}[label={\rm (P\arabic*)}]
  \item\label{it:NoS1} $\# S_1(m)=q\cdot S(m-1)$,
  \item\label{it:valS1} if $y\in S_1(m)$ then there exists $y^\ast\in S(m-1)$ such that $u_m(y)=u_{m-1}(y^\ast)$,
  \item\label{it:S2} $S_2(m)$ can be written as the (disjoint) union
  \begin{equation}\label{eq:decompS2}
   S_2(m)=\bigcup_{k=0}^{q-1}\bigcup_{y\in A_k(m)}\{y\}\cup\{y+(q+1)\ell_{m}\},
  \end{equation}
   where $A_k(m)\subset S_2(m)$, $0\leq k\leq q-1$, 
   are such that if $y,y'\in A_k(m)$, then $[y,y+(q+1)\ell_m]\cap[y',y'+(q+1)\ell_m]=\emptyset$. Moreover, we note that if $y\in A_k(m)$, then by construction
   \begin{equation}\label{eq:propS2um}
    u_m(y)=u_m(y+(q+1)\ell_m),
   \end{equation}
   and in addition we have
   \begin{equation}\label{eq:propS2umder}
    u_m'(y^+)=-u_m'((y+(q+1)\ell_m)^-),\quad u_m'(y^-)=-u_m'((y+(q+1)\ell_m)^+),
   \end{equation}
   where superscripts $+,-$ denote right and left limits, respectively.
 \end{enumerate}
\end{lemma}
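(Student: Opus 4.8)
\textbf{Proof plan for Lemma \ref{lem:Ssplit}.}

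The plan is to track carefully how the grid $S(m)$ at stage $m$ is produced from $S(m-1)$, following the recursive definition \eqref{eq:defum}. Recall that each old interval $I_{m-1}^j$ is cut into $p$ equal pieces $I_{m-1}^{j,k}$ of length $h_m=q\ell_m$, and on each such piece the function $u_m$ oscillates $q$ times between the two endpoint values $u_{m-1}(y_{m-1}^{j,k})$ and $u_{m-1}(y_{m-1}^{j,k+1})$. Thus within one sub-block $I_{m-1}^{j,k}$ the new grid points are $y_{m-1}^{j,k}+a\ell_m$, $a=0,1,\dots,q$: the two endpoints ($a=0$ and $a=q$) are the points where $u_m$ does not change slope, which is exactly the defining property of $S_0(m)$; the remaining $q-1$ interior points ($a=1,\dots,q-1$) are genuinely new ``microstructure'' vertices. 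So first I would verify the trichotomy $S(m)=S_0(m)\cup S_1(m)\cup S_2(m)$ by this direct block-by-block bookkeeping, checking that the definitions \eqref{eq:defS12} of $S_1(m)$ and $S_2(m)$ are consistent with this picture, and that the three sets are pairwise disjoint (for disjointness of $S_0$ and $S_1$ one uses that $q$ is odd so that a point of the form $y^\ast+d\ell_m$ with $d\in D$ cannot be a sub-block endpoint $y_{m-1}^{j,k}$, which sits on the coarser lattice of spacing $h_m=q\ell_m$).

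Next I would establish \ref{it:NoS1} and \ref{it:valS1}. For \ref{it:NoS1}, the set $S_1(m)=\bigcup_{d\in D}\tau_{d\ell_m}(S(m-1))$ is a union of $\# D=q$ translates of $S(m-1)$, and one must check these translates are disjoint: since consecutive points of $S(m-1)$ are $\ell_{m-1}=p^m q^m\cdot p^{-m+1}q^{-m+1}\ell_m=pq\,\ell_m$ apart and the translation amounts $d\in\{-(q-1),\dots,q-1\}$ span a range of length $2(q-1)<pq$ (using $p\ge3$), no collisions occur, giving $\#S_1(m)=q\cdot\#S(m-1)$. For \ref{it:valS1}: if $y=y^\ast+d\ell_m$ with $y^\ast\in S(m-1)$ and $|d|\le q-1$ odd, then $y$ lies in the first or last sub-block adjacent to the old vertex $y^\ast$, at an odd multiple of $\ell_m$ from the nearest sub-block endpoint that equals $y^\ast$; by the first two cases of \eqref{eq:defum}, $u_m$ at points an odd distance (in units of $\ell_m$) from $y^\ast$ takes the value at the \emph{other} endpoint of that sub-block — but here one must be a little careful and instead argue that $y$ is at distance an even multiple of $\ell_m$ from a sub-block endpoint carrying the value $u_{m-1}(y^\ast)$, so $u_m(y)=u_{m-1}(y^\ast)$; this case analysis (which old vertex, left or right block, parity of $d$) is the slightly fiddly part.

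Finally, for \ref{it:S2}: the interior points $S_2(m)$ inside a fixed sub-block $I_{m-1}^{j,k}$ are $z_a:=y_{m-1}^{j,k}+a\ell_m$ for $a=1,\dots,q-1$, and by \eqref{eq:defum} the value $u_m(z_a)$ equals $u_{m-1}(y_{m-1}^{j,k})$ for $a$ even and $u_{m-1}(y_{m-1}^{j,k+1})$ for $a$ odd, while the local slope $|u_m'|=s_m$ alternates sign at each $z_a$. I would pair each interior point $z_a$ with $z_a+(q+1)\ell_m$: since the sub-block has length $q\ell_m$, the point $z_a+(q+1)\ell_m$ lies in the \emph{next} sub-block $I_{m-1}^{j,k+1}$ (or into the next old interval, wrapping around the torus), at position $a+1$ within it, hence at distance an even/odd multiple of $\ell_m$ of the \emph{opposite} parity; working out the endpoint values and using continuity $u_m(y_{m-1}^{j,k+1})=u_m(y_{m-1}^{j,k+1})$ one gets \eqref{eq:propS2um}, and comparing the two local slope patterns (which are mirror images because the two adjacent sub-blocks oscillate between endpoint values in opposite orders near their common vertex) yields the sign relations \eqref{eq:propS2umder}. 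To define $A_k(m)$, I would for each residue $k\in\{0,\dots,q-1\}$ collect those interior points $z_a$ with $a\equiv k$ (say) in a suitable sense so that the associated intervals $[z_a,z_a+(q+1)\ell_m]$, all of length $(q+1)\ell_m$, are spaced at least $\ell_{m-1}=pq\,\ell_m>(q+1)\ell_m$ apart (again $p\ge3$ is what makes this work) and therefore disjoint; the union over $k$ then recovers all of $S_2(m)$ as in \eqref{eq:decompS2}. The main obstacle is purely combinatorial: keeping the parity/position bookkeeping straight across sub-block and old-interval boundaries (including torus wrap-around), and choosing the index sets $A_k(m)$ so that the length-$(q+1)\ell_m$ intervals genuinely do not overlap — everything else is a direct unwinding of the construction \eqref{eq:defum}.
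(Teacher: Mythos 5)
Your high-level approach matches the paper's: both parse the new grid $S(m)$ block-by-block from the construction \eqref{eq:defum}, establish \ref{it:NoS1} by disjointness of the $q$ translates, prove \ref{it:valS1} by a parity/distance argument around an old vertex, and obtain \ref{it:S2} by pairing a point with its translate by $(q+1)\ell_m$ and grouping the ``starting points'' into residue classes. The paper carries this out by writing down explicit index sets, and your plan is a paraphrase of the same strategy.

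However, two points in your treatment of \ref{it:S2} are imprecise in a way that would derail the argument if carried out as written. First, your plan to ``pair each interior point $z_a$ with $z_a+(q+1)\ell_m$'' cannot be taken literally: the identity $u_m(z_a)=u_m(z_a+(q+1)\ell_m)$ holds precisely when $a$ is odd (so that $u_m(z_a)$ equals the $u_{m-1}$-value at the \emph{right} endpoint $y_{m-1}^{j,k+1}$ of the sub-block of $z_a$, and $u_m(z_a+(q+1)\ell_m)$ equals the $u_{m-1}$-value at the \emph{left} endpoint of its sub-block, which is again $y_{m-1}^{j,k+1}$); when $a$ is even one gets $u_{m-1}(y_{m-1}^{j,k})\neq u_{m-1}(y_{m-1}^{j,k+2})$, since $u_{m-1}$ has constant nonzero slope on $I_{m-1}^j$. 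So the $A_k(m)$ must be chosen to consist only of the odd-relative-position points (the local extrema achieving the right sub-block endpoint value); the even-relative-position points of $S_2(m)$ then appear as the corresponding $y+(q+1)\ell_m$, and together they exhaust $S_2(m)$. Second, your claimed spacing is wrong: you say the intervals in each $A_k(m)$ should be ``spaced at least $\ell_{m-1}=pq\,\ell_m$ apart'', i.e.\ at most one starting point per old interval per $A_k$. But there are on the order of $(p-1)(q-1)/2$ starting points in each old interval, which for $p$ large is far more than $q$, so this cannot be achieved with only $q$ index sets. What actually works, and what the paper does, is to step by $2h_m=2q\ell_m$ (two sub-blocks), giving a spacing $2q\ell_m>(q+1)\ell_m$, which is both achievable with at most $q$ (in fact $(q+1)/2$) residue classes per old interval and sufficient for disjointness. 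Everything else in your plan — the trichotomy, \ref{it:NoS1}, \ref{it:valS1}, and the slope-sign relation \eqref{eq:propS2umder} — is sound.
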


In fact, the decomposition into sets $A_k(m)$ could be achieved with only $\frac{q+1}{2}$ sets -- see Figure \ref{fig:Ssets}. However, to simplify the exposition of the proof we give ourselves some room by allowing for up to $q$ such sets, since this distinction is inconsequential for the arguments that follow in Section \ref{ssec:pf_inv} .

\begin{figure}[h!]
  \includegraphics[scale=1]{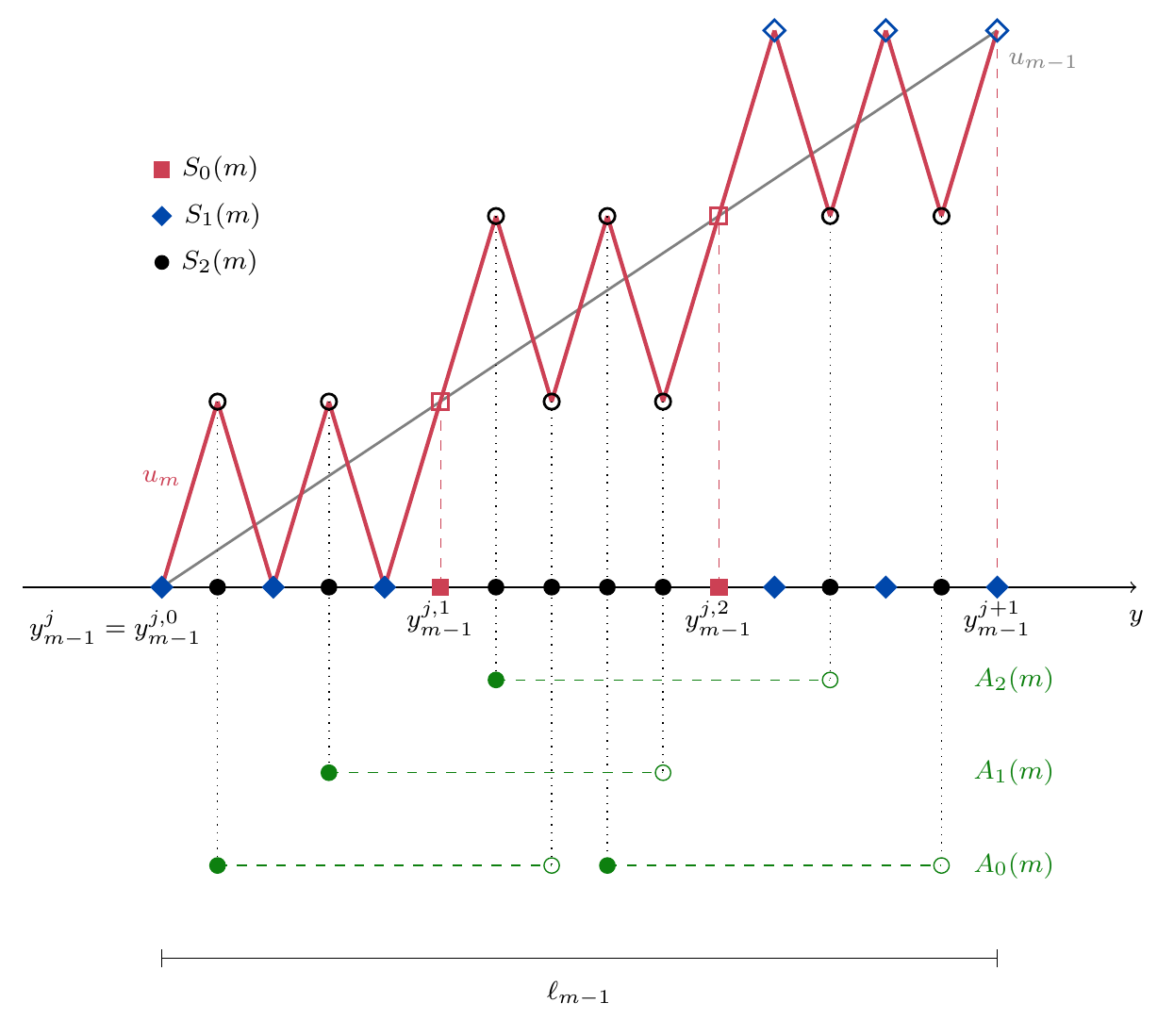}
  \caption{Examples of the sets $S_j(m)$ and $A_k(m)$ of Lemma \ref{lem:Ssplit}, $0\leq j\leq 2$, $0\leq k\leq 3$, in the case where $q=5$, $p=3$.}
  \label{fig:Ssets}
\end{figure}

\begin{proof}
 For $S_1(m)$ defined as in \eqref{eq:defS12}, clearly we have $S_1(m)\subset S(m)$, and by construction the claim \ref{it:NoS1} follows. For \ref{it:valS1} it suffices to observe that if $y\in S_1(m)$, then by construction there exists $d\in D$ such that {$y^\ast:=\tau_{d\ell_m}(y)\in S(m)$}. Since the two grid points satisfy {$\abs{y^\ast-y}\leq\frac{q-1}{2}\ell_m$} it then follows that $u_m(y)=u_{m-1}(y^\ast)$.
 
 We prove the properties \ref{it:S2} of $S_2(m)$ by showing how the sets $A_k(m)$, $0\leq k\leq q-1$, may be constructed. 
 To this end, consider an interval $I_{m-1}^j=[y_{m-1}^j,y_{m-1}^{j+1}]$ and define $A_k(m)$ by requiring that
 \begin{equation}
 \begin{aligned}
  &A_k(m)\cap(I_{m-1}^j\cap S_2(m))=\{y_{m-1}^j+(2k+1)\ell_m+2n h_m;\,n\in\N\}\cap I_{m-1}^j,\quad 0\leq k\leq \frac{q-1}{2},\\
  &A_k(m)\cap(I_{m-1}^j\cap S_2(m))=\{y_{m-1}^j+(2k+2)\ell_m+2nh_m;\,n\in\N\}\cap I_{m-1}^j,\quad \frac{q-1}{2}< k\leq q-1.\\
 \end{aligned} 
 \end{equation}
 One verifies directly that the decomposition \eqref{eq:decompS2} holds.
 
Finally, observe that if $y\in I_{m-1}^j\cap S_2(m)$ there exist $k\in\{0,\ldots,p-1\}$ and $a\in\{0,\ldots,\frac{q-1}{2}\}$ such that $y=y_{m-1}^{j,k}+(2a+1)\ell_m$, and hence by \eqref{eq:defum} there holds
\begin{equation}
 u_m(y)=u_{m-1}(y_{m-1}^{j,k+1}).
\end{equation}
Moreover, we have
 \begin{equation}
  y+(q+1)\ell_m=y+h_m+\ell_m=y_{m-1}^{j,k+1}+2(a+1)\ell_m,
 \end{equation}
so that by the definition \eqref{eq:defum} of $u_m$ it follows that
\begin{equation}
 u_m(y+(q+1)\ell_m)=u_{m-1}(y_{m-1}^{j,k+1}),
\end{equation}
thus proving the claim \eqref{eq:propS2um}. Equation \eqref{eq:propS2umder} follows by noting that if $y$ is a local maximum, then $y+(q+1)\ell_m$ is a local minimum, and vice versa.

This completes the proof.
\end{proof}

\subsection{Proof of Proposition \ref{prop:inv_decay}}\label{ssec:pf_inv}
Towards the proof of Proposition \ref{prop:inv_decay} we note that the equation \eqref{eq:pass_scal} decouples in $x$, so we may write
\begin{equation}\label{eq:Fdecomp}
 f(t,x,y)=\sum_{k\in\Z} f_k(t,y)\e^{itkx}.
\end{equation}
Then $f$ satisfies \eqref{eq:pass_scal} iff there holds
\begin{equation}\label{eq:Fpass_scal}
 \de_t f_k+iku(y) f_k=0,\qquad k\in\Z,
\end{equation}
which has solution $f_k(t,y)=\e^{-itku(y)}f_k^{in}(y)$. Thus there holds that
\begin{equation}\label{eq:H-1est}
\begin{aligned}
 \norm{f(t)}_{L^2_xH^{-1}_y}^2&= \sum_{k\in\Z}\norm{f_k(t)}_{H^{-1}_y}^2=  \sum_{k\in\Z}\sup_{\psi\in H^1_y,\norm{\psi}_{H^1}\leq 1}\abs{\ip{f_k(t),\psi}}_{L_y^2}^2\\
 &=\sum_{k\in\Z}\left(\sup_{\psi\in H^1_y,\norm{\psi}_{H^1}\leq 1}\abs{\int_\T \e^{-itku(y)}f^{in}_k(y)\psi(y)\dd y}\right)^2.
\end{aligned}
\end{equation}

\subsubsection{Proof of Part \eqref{it:1}}
To prove the general decay result \eqref{eq:inv_decay_gen} of Proposition \ref{prop:inv_decay} it thus suffices to invoke the below Lemma \ref{lem:inv_decay} in \eqref{eq:H-1est}, which gives
\begin{equation}
 \abs{\int_\T \e^{itku(y)}f^{in}_k(y)\psi(y)\dd y}\leq C (t\abs{k})^{-1}\norm{f^{in}_k\psi}_{W_y^{1,1}}\leq C (t\abs{k})^{-1}\norm{f^{in}_k}_{H_y^1}\norm{\psi}_{H_y^1}.
\end{equation}
This shows that
\begin{equation}
 \norm{f(t)}_{L^2_xH^{-1}_y}^2\leq Ct^{-2}\sum_{k\in\Z{\setminus\{0\}}}\abs{k}^{-2}\norm{f^{in}_k}_{H_y^1}^2\leq Ct^{-2}\norm{f^{in}}_{H^{-1}_xH^1_y}^2, 
\end{equation}
which is the claim. Thus it remains to establish the following:

\begin{lemma}\label{lem:inv_decay}
 There exists a constant $C:= C(p,q)>0$ such that
\begin{equation}\label{ts:inv_decay}
  \int_\T \e^{itu_m(y)}\varphi(y)\dd y\leq Ct^{-1}\norm{\varphi}_{W_y^{1,1}}.
 \end{equation}
\end{lemma}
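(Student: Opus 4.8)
The plan is to prove the oscillatory integral bound \eqref{ts:inv_decay} by a stationary phase argument carried out on the piecewise linear approximants $u_m$, choosing $m$ in terms of $t$ so that the slope $s_m = q^m/\pi$ is comparable to $t^{-1}$ in the appropriate sense, and then exploiting the self-similar structure of the construction to keep the resulting bound uniform in $m$. Since $u_m$ is linear on each grid interval $I_m^j$ of length $\ell_m$ with slope exactly $\pm s_m$, on each such interval the integral $\int_{I_m^j} \e^{itu_m(y)}\varphi(y)\dd y$ can be integrated by parts once, producing a factor $(t s_m)^{-1}$ together with boundary terms at the grid points and an error term involving $\varphi'$. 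The boundary terms are the crux: naively summing $\abs{\varphi}$ over the $N_m$ grid points gives a factor $N_m \ell_m^{-1}$-type loss, so one must pair up adjacent intervals and use cancellation between the boundary contributions. This is exactly where Lemma \ref{lem:Ssplit} enters: the grid points split into $S_0(m)$ (no slope change, so no boundary term at all), $S_1(m)$ (only $q\cdot\#S(m-1)$ of them, so controllably few — one recurses here), and $S_2(m)$, which by \ref{it:S2} comes in pairs $\{y, y+(q+1)\ell_m\}$ on which $u_m$ takes the \emph{same} value \eqref{eq:propS2um} and has \emph{opposite} slopes \eqref{eq:propS2umder}; the two boundary terms $\e^{itu_m(y)}/(its_m)$ therefore cancel to leading order, and the remainder is an integral of $\varphi'$ over the short interval between them.

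Concretely, I would first fix $m=m(t)$ to be the largest integer with $s_m \le t$ (say), so that $t s_m \in [t s_{m-1}, t s_m] \subset [q^{-1}, 1]\cdot(\text{something}\sim t^2)$ — more precisely one wants $t s_m \gtrsim 1$ while $s_{m-1}$ is not too much smaller, using $s_m/s_{m-1}=q=O(1)$. Then replace $u$ by $u_m$ in the integral, controlling the error $\int_\T (\e^{itu} - \e^{itu_m})\varphi\,\dd y$ by $t\norm{u-u_m}_{L^\infty}\norm{\varphi}_{L^1} \lesssim t p^{-m}\norm{\varphi}_{L^1}$ via \eqref{eq:convest}; the choice of $m$ must also ensure $t p^{-m} \lesssim t^{-1}$, i.e. $p^m \gtrsim t^2$, which is compatible with $s_m = q^m/\pi \le t$ precisely because $(pq)^\alpha = p$ forces $p^m = (\pi s_m)^{1/\alpha}\cdot(\text{const})$; so $p^m \sim s_m^{1/\alpha} \le t^{1/\alpha}$, and one needs $t^{1/\alpha} \gtrsim t^2$, i.e. $\alpha \le 1/2$ — which may not always hold, so the cleaner route is to pick $m$ so that $s_m \sim t$ is replaced by balancing the two errors $(ts_m)^{-1}$ and $tp^{-m}$ directly, giving $p^m s_m^{-1} \sim t^2$, i.e. $p^m q^{-m} \sim t^2$ up to constants; since $p^m q^{-m} = p^m/(p^m q^m)\cdot p^m = \ell_m^{-1}\cdot(2\pi)^{-1}\cdots$ hmm — in any case there is an $m$ making both terms $O(t^{-1})$ because the ratio of consecutive values of the relevant quantity is bounded by $pq = O(1)$. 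I will not belabor this: the point is that a single discrete optimization in $m$, with $O(1)$ multiplicative gaps, does the job.

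The main estimate is then the \emph{uniform-in-$m$} bound
\begin{equation}\label{eq:unifbd}
\abs{\int_\T \e^{itu_m(y)}\varphi(y)\,\dd y} \le \frac{C}{t s_m}\norm{\varphi}_{W^{1,1}} + (\text{recursion into }u_{m-1}).
\end{equation}
After integration by parts on each $I_m^j$, the interior error is $\le (ts_m)^{-1}\int_\T\abs{\varphi'}$. For the boundary terms: those at $S_0(m)$ vanish; those at $S_2(m)$ are organized by \eqref{eq:decompS2} into the $q$ families $A_k(m)$, and within each family the paired points $y, y+(q+1)\ell_m$ contribute $\frac{1}{its_m}\bigl(\e^{itu_m(y)}-\e^{itu_m(y+(q+1)\ell_m)}\bigr)\varphi(\cdot) + \ldots$, which by \eqref{eq:propS2um} and a telescoping/mean-value step equals $\frac{1}{its_m}$ times an integral of $\varphi'$ over $[y, y+(q+1)\ell_m]$ plus the difference $\varphi(y)-\varphi(y+(q+1)\ell_m)$, again an integral of $\varphi'$; since the intervals $[y,y+(q+1)\ell_m]$ within a fixed $A_k(m)$ are disjoint, summing over $A_k(m)$ and then over the $q$ values of $k$ costs only a factor $q$ and yields $\le \frac{Cq}{ts_m}\norm{\varphi'}_{L^1}$. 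Finally the boundary terms at $S_1(m)$: by \ref{it:NoS1} there are only $q\cdot\#S(m-1)$ of them and by \ref{it:valS1} the phase values there agree with values of $u_{m-1}$, which lets one re-sum them into an integral against $u_{m-1}$ — feeding the recursion in \eqref{eq:unifbd}. The recursion terminates at $u_0$, a single tent, handled by hand, and because each level contributes a geometrically summable amount (the $\#S_1$ growth $q\cdot\#S(m-1)$ is exactly offset by the slope growth $s_m = q\,s_{m-1}$ in the denominator), the total is $\le C(p,q)(ts_m)^{-1}\norm{\varphi}_{W^{1,1}}$, uniformly in $m$.

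The step I expect to be the real obstacle is making the $S_1(m)$ recursion rigorous: one must show that the leftover boundary sum over $S_1(m)$, after the $S_2$-cancellation has been used, genuinely reassembles into an expression controlled by the same quantity at level $m-1$ — i.e. that the "close-to-previous-grid" points carry phase data faithful enough to $u_{m-1}$ (this is \ref{it:valS1}, but one needs it together with matching of the derivatives/slopes, or at least enough structure that the integration-by-parts bookkeeping closes), and that the geometric series in the recursion converges with constant depending only on $p,q$. A secondary technical point is the bookkeeping of which grid points are local maxima versus minima so that \eqref{eq:propS2umder} can be applied with the correct signs when pairing; this is where the oddness of $p,q$ and the explicit form \eqref{eq:defum} are used, and it is routine but fiddly. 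Everything else — the single integration by parts on a linear piece, the $L^\infty$ approximation error, the discrete optimization over $m$ — is elementary.
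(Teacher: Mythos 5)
Your decomposition matches the paper's exactly: one integration by parts per linear piece, boundary terms classified via $S_0,S_1,S_2$, pairwise cancellation over the families $A_k(m)$ for $S_2$, and a recursion at the $S_1$ points using properties \ref{it:NoS1} and \ref{it:valS1}. You also correctly identify where the real work is (closing the $S_1$ recursion) and correctly note that the combinatorial growth $\#S_1(m)=q\cdot\#S(m-1)$ is offset by the slope growth $s_m=qs_{m-1}$. But your final bound is wrong in a way that is not a typo: you conclude the integral is $\leq C(p,q)(ts_m)^{-1}\norm{\varphi}_{W^{1,1}}$, ``uniformly in $m$.'' Since $s_m=q^m/\pi\to\infty$, that would force $\int_\T\e^{itu_m(y)}\varphi(y)\,\dd y\to 0$ as $m\to\infty$, which is false (the integral converges to $\int\e^{itu}\varphi$, nonzero in general). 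The cancellation you identified is \emph{neutral}, not a gain: the $S_1$ boundary sum satisfies $T_m^1(\varphi)=\sum_{d\in D}\frac1q T_{m-1}(\varphi\circ\tau_{-d\ell_m})$ with $\#D=q$, so $\abs{T_m^1}\leq\max_d\abs{T_{m-1}(\varphi\circ\tau_{-d\ell_m})}$ -- no net factor of $q$ or $1/q$ survives. The additive error at each level is $\abs{T_m^2}\lesssim(tq^m)^{-1}\norm{\varphi'}_{L^1}$, geometrically summable, and the base case $\abs{T_0}\lesssim t^{-1}\norm{\varphi}_{W^{1,1}}$ uses only that $s_0=\pi^{-1}$ is a constant. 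Telescoping gives $\abs{T_m}\leq Ct^{-1}\norm{\varphi}_{W^{1,1}}$ with $C=4\pi+\tfrac{q}{q-1}$, and adding the interior integration-by-parts error $\lesssim(ts_m)^{-1}\leq\pi t^{-1}$ gives the lemma. Note also that the recursion must be run on the boundary-term sum $T_m$, not on the full oscillatory integral: the $S_1$ boundary contributions reassemble into $T_{m-1}$, not into $\int\e^{itu_{m-1}}\varphi$.

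This miscalculation is what drives your otherwise puzzling discussion of choosing $m=m(t)$ and balancing $(ts_m)^{-1}$ against $tp^{-m}$. No such optimization is needed or possible: the lemma asserts that for \emph{every} $m$ and every $t$ the integral $\int\e^{itu_m}\varphi$ is $\leq Ct^{-1}\norm{\varphi}_{W^{1,1}}$ with $C$ depending only on $p,q$, and once that is established, the corresponding bound for $u$ follows by letting $m\to\infty$ via \eqref{eq:convest} with no balancing whatsoever, since the approximation error tends to zero. That you felt an optimization over $m$ was required is the signal that the $(ts_m)^{-1}$ is a genuine error in closing the recursion, not merely a slip of the pen.
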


\begin{proof}
Since from \eqref{eq:convest} 
we have that $\norm{u-u_m}_{L^\infty}\leq p^{-m}$, it suffices in fact to show that \emph{uniformly in $m$} there holds that
 \begin{equation}
  \int_\T \e^{itu_m(y)}\varphi(y)\dd y\leq Ct^{-1}\norm{\varphi}_{W^{1,1}}.
 \end{equation}
To this end we integrate by parts on the intervals $I_m^{j}$, $0\leq j\leq N_m$, where $u_m$ is linear (and thus $u_m''=0$) and obtain
\begin{equation}
  \int_\T \e^{itu_m(y)}\varphi(y)\dd y=\int_\T \e^{itu_m(y)}\frac{\varphi'(y)}{tu'_m}\dd y+\sum_{j=0}^{N_m-1} \left[\frac{\e^{itu_m(y)}}{tu_m'(y)}\varphi(y)\right]_{y=y_m^{j}}^{y_m^{j+1}}.
\end{equation}
Since by \eqref{eq:umder} we have $\abs{u_m'}=\pi^{-1}q^m$ it follows that
\begin{equation}
 \abs{\int_\T \e^{itu_m(y)}\frac{\varphi'(y)}{tu'_m}\dd y}\leq\frac{\pi}{t}{q^{-m}}\norm{\varphi'}_{L^1}.
\end{equation}
Moreover, for the boundary terms we note that 
\begin{equation}
\begin{aligned}
 T_m(\varphi):=\sum_{j=0}^{N_m-1} \left[\frac{\e^{itu_m(y)}}{tu_m'(y)}\varphi(y)\right]_{y=y_m^{j}}^{y_m^{j+1}}&=\left(\sum_{y\in S_1(m)}+\sum_{y\in S_2(m)}\right)\left(\frac{\e^{itu_m(y)}}{tu_m'(y^+)}\varphi(y)+\frac{\e^{itu_m(y)}}{tu_m'(y^-)}\varphi(y)\right)\\
 &=:T_m^1(\varphi)+T_m^2(\varphi),
\end{aligned} 
\end{equation}
since all grid points in $S(m)$ appear twice in the sum, once as upper and once as lower boundary points of the integrals. Here, for points in $S_1(m)\cup S_2(m)$ both terms have the same sign (either lower boundary with positive slope $s_m$ and upper boundary with negative slope $-s_m$ or the other way around), whereas for points in $S_0(m)$ the slope does not change and hence they do not contribute.

We prove now by induction on $m\in \N$ that for every $\varphi \in W^{1,1}$ we have
\begin{equation}
\begin{aligned}\label{eq:ind-est}
 \abs{T_m(\varphi)} &\leq \frac{4 \pi}{ t}\norm{\varphi}_{W^{1,1}}+\sum_{k=1}^m\frac{q}{tq^k}\norm{\varphi'}_{L^1}.
\end{aligned}
\end{equation}
The conclusion \eqref{ts:inv_decay} follows then by setting $C= 4 \pi+\frac{q}{q-1}$.
Indeed, for $m=0$, this is a consequence of a direct computation and of the fact that $\sup \varphi \leq \norm{\varphi}_{W^{1,1}}$, namely
\begin{equation}
\begin{aligned}
 \abs{T_0(\varphi)} &= \left[\frac{\e^{itu_0(y)}}{tu_0'(y)}\varphi(y)\right]_{y=-\pi}^{0}+  \left[\frac{\e^{itu_0(y)}}{tu_0'(y)}\varphi(y)\right]_{y=0}^{\pi}
\\& = \frac{1}{ts_0} \big(-\varphi(\pi)+ 2 \e^{i} \varphi(0) - \varphi(-\pi)\big) \leq \frac{4 \pi}{t}\norm{\varphi}_{W^{1,1}}.
\end{aligned}
\end{equation}
Now we assume the estimate \eqref{eq:ind-est} for $m-1$ and we prove it for $m$. By property \ref{it:valS1} we have
\begin{equation}
\begin{aligned}\label{eq:T1}
 T_m^1(\varphi)&=\sum_{y\in S_1(m)}\left(\frac{\e^{itu_m(y)}}{tu_m'(y^+)}\varphi(y)+\frac{\e^{itu_m(y)}}{tu_m'(y^-)}\varphi(y)\right)\\
 &=\sum_{d\in D}\sum_{y\in\tau_{d\ell_m}(S(m-1))}\left(\frac{\e^{itu_m(y)}}{tu_m'(y^+)}\varphi(y)+\frac{\e^{itu_m(y)}}{tu_m'(y^-)}\varphi(y)\right)\\
 &=\sum_{d\in D}\sum_{y\in S(m-1)}\left(\frac{\e^{itu_m(\tau_{-d\ell_m}y)}}{tu_m'(\tau_{-d\ell_m}y^+)}\varphi(\tau_{-d\ell_m}y)+\frac{\e^{itu_m(\tau_{-d\ell_m}y)}}{tu_m'(\tau_{-d\ell_m}y^-)}\varphi(\tau_{-d\ell_m}y)\right)\\
 &=\sum_{d\in D}\sum_{y\in S(m-1)}\left(\frac{\e^{itu_{m-1}(y)}}{q\cdot tu_{m-1}'(y^+)}\varphi(\tau_{-d\ell_m}y)+\frac{\e^{itu_{m-1}(y)}}{q\cdot tu_{m-1}'(y^-)}\varphi(\tau_{-d\ell_m}y)\right)\\
 &=\sum_{d\in D}\frac{1}{q}T_{m-1}(\varphi\circ\tau_{-d\ell_m}).
\end{aligned} 
\end{equation}
On the other hand, splitting $T^2_m(\varphi)=T^{2,+}_m(\varphi)+T^{2,-}_m(\varphi)$ for terms with $y^+$ resp.\ $y^-$ we invoke \eqref{eq:propS2um} and \eqref{eq:propS2umder} to deduce that
\begin{equation}\label{eq:T2}
\begin{aligned}
 \abs{T_m^{2,+}(\varphi)}&:=\abs{\sum_{y\in S_2(m)}\frac{\e^{itu_m(y)}}{tu_m'(y^+)}\varphi(y)}\\
 &=\abs{\sum_{k=0}^{q-1}\sum_{y\in A_k(m)}\frac{\e^{itu_m(y)}}{tu_m'(y^+)}\varphi(y)+\frac{\e^{itu_m(y+(q+1)\ell_m)}}{tu_m'((y+(q+1)\ell_m)^+)}\varphi(y+(q+1)\ell_m)}\\
 &=\abs{\sum_{k=0}^{q-1}\sum_{y\in A_k(m)}\frac{\e^{itu_m(y)}}{tu_m'(y^+)}[\varphi(y)-\varphi(y+(q+1)\ell_m)]}\\
 &\leq \sum_{k=0}^{q-1}\frac{1}{tq^m}\sum_{y\in A_k(m)}\abs{\int_y^{y+(q+1)\ell_m}\varphi'(z)\dd z}\\
 &\leq \frac{q}{tq^m}\norm{\varphi'}_{L^1},
\end{aligned}
\end{equation}
where in the last inequality we used the disjointness of the intervals $[y,y+(q+1)\ell_m]$ and $[y',y'+(q+1)\ell_m]$ for $y,y'\in A_k(m)$, by property \ref{it:S2} of Lemma \ref{lem:Ssplit}. Similarly we have $\abs{T^{2,-}_m(\varphi)}\leq \frac{q}{tq^m}\norm{\varphi'}_{L^1}$.

{From \eqref{eq:T1} and \eqref{eq:T2}, since $\#D=q$ and using the inductive assumption  \eqref{eq:ind-est} for $m-1$ to estimate $T_{m-1}(\varphi\circ \tau_{-d_1\ell_m})$ in the right-hand side of \eqref{eq:T2}, we deduce that
\begin{equation}
\begin{aligned}
 T_{m}(\varphi)&=T_m^1(\varphi)+T_m^2(\varphi)=\sum_{d_1\in D}\frac{1}{q}T_{m-1}(\varphi\circ \tau_{-d_1\ell_m})+T_m^2(\varphi)
 \leq \frac{4 \pi}{ t}\norm{\varphi}_{W^{1,1}}+\sum_{k=1}^m\frac{q}{tq^k}\norm{\varphi'}_{L^1}.
\end{aligned} 
\end{equation}
which proves the inductive estimate \eqref{eq:ind-est}. 
}

\end{proof}

\subsubsection{Proof of Parts \eqref{it:2} and \eqref{it:3}}
In view of the decomposition \eqref{eq:Fdecomp} into Fourier modes, the below Lemma \ref{lem:inv_decay_new} can be applied to give our claims: In order to establish \eqref{eq:inv_decay_lobd}, we let $f^{in}_\star:=\cos(x)$, so that by \eqref{eq:H-1est} the corresponding solution $f_\star(t)$ satisfies
\begin{equation}
\begin{split}
 \norm{f_\star(t)}_{L^2_xH^{-1}_y}^2&=\sum_{k=\pm 1}\left[\sup_{\psi\in H^1_y,\norm{\psi}_{H^1_y}\leq 1}\abs{\int_\T \e^{-itku(y)}\left(f^{in}_\star\right)_k(y)\psi(y)\dd y }\right]^2
 \\&\geq \abs{\int_\T \e^{\pm itu(y)}\dd y }^2 \norm{f^{in}_\star}_{L^2_xH^1_y}^2 , 
\end{split}
\end{equation}
where in the last inequality we used that $\psi= 1$ is an admissible choice in the supremum and that $\left(f^{in}_\star\right)_k(y) \equiv 1$.
Now it suffices to apply \eqref{ts:inv_decay3} with $t=t'_m$ to obtain \eqref{eq:inv_decay_lobd}. Similarly, the bounds \eqref{eq:inv_decay_fast_m} are proved by invoking \eqref{eq:H-1est} and the below \eqref{ts:inv_decay2} with $\varphi=f^{in}_k\cdot \psi$.

\begin{remark}
This argument shows that the particular choice $f^{in}_\star:=\cos(x)$ plays a minor role. For instance, the same estimate works for initial data of the form
$$f^{in}_\star(x,y) = f^{(1)}(x) f^{(2)}(y),$$
where 
$$\|f^{(1)}\|_{L^2_x} = 1, \qquad |f^{(1)}|\geq \frac 14, \qquad \|f^{(2)}\|_{H^1_y} = 1, \qquad \inf_{y}f^{(2)}(y) \geq \frac 14.$$
\end{remark}

The proof of Proposition \ref{prop:inv_decay} is thus completed once we have proved the following:
\begin{lemma}\label{lem:inv_decay_new}
 There exists a constant $C:= C(p,q)>0$ such that for $t_m:= {2\pi p^{m}}$, $t'_m:=\pi p^m$, $m\in \N$, there holds
\begin{align}
  \left|\int_\T \e^{it_m u(y)}\varphi(y)\dd y \right|&\leq Ct_m^{-\frac 1\alpha}\norm{\varphi'}_{L^{1}},\label{ts:inv_decay2}\\
  \left| \int_\T \e^{it_m'u(y)}\dd y \right| &\geq C(t'_m)^{-1} 
  . \label{ts:inv_decay3}
 \end{align}
\end{lemma}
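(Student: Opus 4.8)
The plan is to exploit the self-similar structure of $u$ together with the special choice of times $t_m = 2\pi p^m$ and $t'_m = \pi p^m$, which are tuned so that the oscillation $\e^{it_m u_m}$ is essentially trivial on the grid $S(m)$. Concretely, recall that $u_m$ takes on the grid $S(m)$ values that differ from one another by integer multiples of $s_m \ell_m = p^{-m}$ (this is the content of \eqref{eq:umder}--\eqref{eq:lm}), and more precisely each value $u_m(y_m^j)$ is of the form $1 + (\text{integer})\cdot p^{-m}$ up to the coarse structure inherited from $u_0$. Since $t_m p^{-m} = 2\pi$, the phase $\e^{it_m u_m(y)}$ is \emph{constant} (equal to $\e^{2\pi i}= 1$, suitably interpreted relative to the $u_0$-values) on all grid points of $S(m)$. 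I would first make this precise: writing $f(t,y) = \e^{it u(y)}\varphi(y)$ and replacing $u$ by $u_m$ at the cost of $\norm{u-u_m}_{L^\infty}\leq p^{-m}$ via \eqref{eq:convest}, the error is $\lesssim t_m p^{-m}\norm{\varphi}_{L^1} \sim \norm{\varphi'}_{L^1}$, which is \emph{not} small — so instead I would choose a slightly larger scale: replace $u$ by $u_{m+M}$ for a fixed large $M=M(p,q)$, so the replacement error is $t_m p^{-(m+M)}\norm{\varphi}_{L^1} \lesssim p^{-M}\norm{\varphi'}_{L^1}$, genuinely small, while the phase $\e^{it_m u_{m+M}(y)}$ on the grid $S(m+M)$ still behaves like $\e^{2\pi i p^{-M}\cdot(\text{integer})}$ — here one uses that $t_m \ell_{m+M} s_{m+M} = 2\pi q^M p^{-M}$, so one wants $M$ chosen relative to the ratio; this needs the \emph{joint} structure. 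I expect the cleanest route is to simply track the boundary terms in the integration by parts of Lemma \ref{lem:inv_decay} applied to $u_{m}$ directly and observe that the leading boundary contributions cancel at these special times.

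More precisely, for \eqref{ts:inv_decay2} I would integrate by parts twice. After one integration by parts (as in the proof of Lemma \ref{lem:inv_decay}) the leading term $T_m(\varphi)$ has size $\lesssim t_m^{-1}\norm{\varphi}_{W^{1,1}}$, which only gives $t_m^{-1}$, not $t_m^{-1/\alpha}$. To improve this we must use that at $t=t_m$ the phases $\e^{it_m u_m(y)}$, $y\in S(m)$, all coincide: indeed $u_m(y)-u_m(y')\in p^{-m}\Z$ for $y,y'\in S(m)$, and $t_m p^{-m}=2\pi$. Hence in the boundary sum $T_m(\varphi)=\sum_{y\in S_1(m)\cup S_2(m)}(\tfrac{\e^{it_m u_m(y)}}{t_m u_m'(y^+)}+\tfrac{\e^{it_m u_m(y)}}{t_m u_m'(y^-)})\varphi(y)$ the exponentials factor out of each local sum and the slopes $u_m'(y^\pm)=\pm s_m$ produce \emph{exact cancellations} when grouped in consecutive pairs (this is the cancellation property advertised in the introduction — "a property not shared by a general Weierstrass-type function"). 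What remains after this cancellation is a sum of differences $\varphi(y)-\varphi(y')$ over pairs of neighbouring grid points at distance $O(\ell_m)$, which, being telescoping/disjointly supported, is $\lesssim \ell_m \norm{\varphi'}_{L^1}$ — but now carrying the prefactor $\tfrac{1}{t_m s_m}=\tfrac{\pi}{t_m q^m}$. Using $\alpha = \tfrac{\ln p}{\ln p+\ln q}$, i.e. $(pq)^\alpha = p$, one checks $t_m^{-1} q^{-m} = (2\pi)^{-1}p^{-m}q^{-m} = (2\pi)^{-1}(pq)^{-m}$ and $t_m^{-1/\alpha} = (2\pi)^{-1/\alpha} p^{-m/\alpha} = (2\pi)^{-1/\alpha}(pq)^{-m}\cdot(2\pi)^{?}$, up to the fixed constant $(2\pi)^{1-1/\alpha}$; so indeed $t_m^{-1}q^{-m}\sim t_m^{-1/\alpha}$, which is exactly the gain we want. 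I would organize the cancellation inductively, as in Lemma \ref{lem:inv_decay}: the $S_1$ part reduces by \eqref{eq:T1} to $\tfrac1q\sum_{d}T_{m-1}(\varphi\circ\tau_{-d\ell_m})$ but now at the \emph{same} time $t_m$, which at level $m-1$ is $t_m = 2\pi p^m = p\cdot t_{m-1}$, an integer multiple of the relevant period — so the same phase-triviality persists down the induction; and the $S_2$ part is handled exactly by \eqref{eq:T2}, contributing the geometric sum $\sum_k \tfrac{q}{t_m q^k}\norm{\varphi'}_{L^1}\lesssim t_m^{-1}q^{-m}\norm{\varphi'}_{L^1}$ — wait, \eqref{eq:T2} as written gives $\tfrac{q}{t_m q^m}$ only at the top level, so I must re-examine which level dominates; the point is that at time $t_m$ the level-$m$ integration by parts kills the $O(t_m^{-1})$ term entirely (phase triviality), leaving only the $S_2$-type remainders from \emph{all} levels, whose sum is geometric and dominated by the finest scale $\tfrac{q}{t_m q^m}$. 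This is the step I expect to be most delicate: correctly bookkeeping that, at the special time $t_m$, the coarse boundary terms $T_m^1$ vanish by phase-triviality rather than merely being bounded, so that only the "fractal remainder" of size $t_m^{-1}q^{-m}\sim t_m^{-1/\alpha}$ survives.

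For the lower bound \eqref{ts:inv_decay3} the strategy is dual. I would take $t'_m = \pi p^m$, so $t'_m p^{-m} = \pi$ and hence for $y,y'\in S(m)$ with $u_m(y)-u_m(y')\in p^{-m}\Z$ one has $\e^{it'_m u_m(y)} = \pm \e^{it'_m u_m(y')}$, with the sign depending on the parity of $(u_m(y)-u_m(y'))p^m$. Because $q$ is odd, consecutive plateaux of $u_{m}$ (at heights differing by $s_m\ell_m = p^{-m}$) alternate sign, and on a fundamental interval of $u_0$ the values $u_m$ climb monotonically — so $\int_\T \e^{it'_m u_m(y)}\,\dd y$ is, up to the smooth $u_0$-envelope, a sum of $N_m$ terms each of the form $(\pm 1)\cdot(\text{integral of a linear phase over one interval }I_m^j)$, and one computes $\int_{I_m^j}\e^{it'_m u_m} = \ell_m \,\e^{it'_m u_m(y_m^j)}\cdot\operatorname{sinc}(\tfrac{t'_m s_m \ell_m}{2})$ with $t'_m s_m \ell_m = \pi$, giving a nonzero universal factor $\tfrac{2}{\pi}$. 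The alternating signs then either reinforce or telescope; on each monotone branch inherited from $u_0$ (there are two, of opposite slope) the contributions from the two branches do \emph{not} cancel against each other because $u_0$ has different offsets, leaving a residual of order $\ell_m \cdot \tfrac{2}{\pi}\cdot(\text{bounded number of surviving terms}) \gtrsim \ell_m = \pi(pq)^{-m}$. Since $(t'_m)^{-1} = \pi^{-1}p^{-m}$, one needs $\ell_m = \pi p^{-m}q^{-m}\gtrsim (t'_m)^{-1}$, i.e. $q^{-m}\gtrsim 1$, which is false — so this naive count is too lossy. The correct lower bound must come from a \emph{single} surviving term at the coarsest level: I would instead estimate $\abs{\int_\T \e^{it'_m u(y)}\dd y}$ from below by comparing with the $m=0$ (or bounded-$m$) stationary-phase contribution near the extrema of $u_0$, where $u\approx u_0$ up to $O(p^{-m})$ fluctuations that are, crucially, \emph{averaged out} by the fine oscillations; the oscillatory integral over a neighbourhood of size $\sim (t'_m)^{-1}$ of the maximum of $u_0$ contributes $\gtrsim (t'_m)^{-1}$ and the fine structure, having mean comparable across scales by the construction, does not destroy it. Making the last point rigorous — that the $C^\alpha$ fluctuations do not cancel the coarse stationary-phase contribution at the maximum of $u_0$ — is the main obstacle for the lower bound; I would address it by again isolating a single plateau of $u_m$ containing the maximum of $u_0$, on which $u_m$ is \emph{exactly constant} of length $\ell_m$, and noting that by periodicity/self-similarity there is a bounded collection of such "flat" pieces whose phases all agree (by the $t'_m p^{-m}=\pi$ tuning, modulo the parity) and sum to something of size $\gtrsim \ell_m \cdot (\#\text{such pieces})$; with $p\geq 3$ odd the count of reinforcing flat pieces is $\gtrsim p^m$, restoring $\gtrsim p^m \ell_m = \pi q^{-m}\cdot p^m\cdot p^{-m}$... — this still needs the parity bookkeeping to confirm the pieces reinforce rather than cancel, which is where oddness of $p,q$ enters decisively, and is the delicate heart of \eqref{ts:inv_decay3}.
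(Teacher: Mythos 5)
Your plan correctly identifies the phase--tuning facts that make the special times $t_m=2\pi p^m$ and $t_m'=\pi p^m$ work (namely $t_m s_m\ell_m=2\pi$, $t_m' s_m\ell_m=\pi$, and that $\e^{it_m u_m}$ is the same at all grid points of $S(m)$), and your sinc computation $\int_{I_m^j}\e^{it_m'u_m}=\frac{2i\ell_m}{\pi}\,\mathrm{sgn}(u_m')\,\e^{it_m'u_m(y_m^j)}$ matches the paper's. However, there are two genuine gaps, both of which you partly acknowledge.

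First, for \eqref{ts:inv_decay2} you have no working method to control the replacement error $\int_\T(\e^{it_m u}-\e^{it_m u_m})\varphi\,\dd y$. You correctly observe that the naive bound $t_m\norm{u-u_m}_{L^\infty}\norm{\varphi}_{L^1}\sim\norm{\varphi}_{L^1}$ does not decay, and that passing to $u_{m+M}$ for fixed $M$ only shrinks this by $p^{-M}$ (still not comparable to $t_m^{-1/\alpha}\norm{\varphi'}_{L^1}$); you then abandon this route without a replacement. The paper's key missing ingredient here is that
\begin{equation}
\int_{I_m^j}\e^{it_m u_m(y)}\dd y=\int_{I_m^j}\e^{it_m u(y)}\dd y=0,
\end{equation}
which follows from the fact that $u_{m'}-u_m$ is $h_{m+1}$--periodic on $I_m^j$ for every $m'\geq m$ together with the roots--of--unity identity $\sum_{k=0}^{p-1}\e^{ik\,2\pi/p}=0$ (using precisely $t_m s_m\ell_m=2\pi$). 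This exact vanishing allows one to subtract a constant from $\varphi$ on each $I_m^j$, turning the replacement error into a sum of oscillations of $\varphi$ on scale $\ell_m$, bounded by $\ell_m\norm{\varphi'}_{L^1}\sim t_m^{-1/\alpha}\norm{\varphi'}_{L^1}$. Without it the argument for \eqref{ts:inv_decay2} does not close. (Also, your attempt to reuse the inductive $T_m^1,T_m^2$ decomposition from Lemma~\ref{lem:inv_decay} is more elaborate than needed: once the per--interval phase triviality is noted, a single integration by parts on each $I_m^j$ directly bounds $\int\e^{it_m u_m}\varphi$ by $C t_m^{-1/\alpha}\norm{\varphi'}_{L^1}$.)

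Second, for \eqref{ts:inv_decay3} your count of ``reinforcing'' pieces is insufficient and your proposed replacement route (stationary phase near the maximum of $u_0$) would need a completely new argument. You correctly see that $N_m$ terms of size $\ell_m$ with $O(1)$ survivors gives only $O(\ell_m)=O((pq)^{-m})\ll(t_m')^{-1}=\pi^{-1}p^{-m}$, but your revised count ``$\gtrsim p^m$ flat pieces'' yields $p^m\ell_m\sim q^{-m}$, which for $\alpha<1/2$ (i.e.\ $q>p$) is still strictly smaller than $(t_m')^{-1}$. What is actually needed is $\sim q^m$ surviving pairs, and the paper gets this combinatorially: after reducing exactly to $u_m$ (via the self--similar identity $\int_{y_m^{2j-2}}^{y_m^{2j}}\e^{it_m'u_l}=\int_{y_m^{2j-2}}^{y_m^{2j}}\e^{it_m'u_m}$ for all $l\geq m$, which your sketch does not establish), the integral over each \emph{pair} of consecutive intervals is $0$ if the slope sign is unchanged and $\pm 4i(pq)^{-m}$ if it flips, so the whole integral equals $4i(pq)^{-m}M_m$ with $M_m$ a signed count of sign changes; the paper then verifies $M_m=Cq^m$ inductively by tracking how each parent pair spawns exactly $q$ contributing pairs, using oddness of $p$ and $q$. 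Your ``parity bookkeeping'' remark points at this but is not carried out, and the heuristic of isolating the extremum of $u_0$ does not yield the required $q^m$ count.
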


\begin{proof}
\textbf{Proof of \eqref{ts:inv_decay2}.} 
Consider $u_m$, $\ell_m$ $s_m$, $\{y^j_m\}_{j=1,..., N_m}$ as in Section~\ref{sec:constr}. To prove~\eqref{ts:inv_decay2}, we have to estimate
\begin{equation}\label{eqn:fast1}
\int_\T \e^{itu(y)}\varphi(y)\dd y
=\int_\T \e^{itu_m(y)}\varphi(y)\dd y + \int_\T \big( \e^{itu(y)}- \e^{itu_m(y)})\varphi(y)\dd y =: (I)+ (II).
\end{equation}

To estimate the first term, we integrate by parts on each of the  segments $[{y^{j-1}_m},{y^{j}_m}]$, ${j=1,...,N_m}$, where $u_m$ is linear of slope $s_m$, which yields
\begin{equation}\label{eqn:fast2}
\begin{split}
(I) &= \sum_{j=1}^{N_m} \frac{1}{t u_m'(y^{j-}_m)}\int_{y^{j-1}_m}^{y^{j}_m} \partial_y(  \e^{itu_m(y)}) \varphi(y)\dd y
\\
&= \sum_{j=1}^{N_m} \frac{1}{t u_m'(y^{j-}_m)} \Big(\int_{y^{j-1}_m}^{y^{j}_m}  \e^{itu_m(y)} \varphi'(y)\dd y + \e^{itu_m(y^{j}_m)} \varphi(y^{j}_m) -  \e^{itu_m(y^{j-1}_m)} \varphi(y^{j-1}_m) \Big).
\end{split}
\end{equation}
We now observe that $t_{{m}} |u_m'(y^{j-}_m)| = 2(pq)^{m}=2 p^{\frac m \alpha}= 2^{1-\frac 1 \alpha} ( \pi^{-1})^{\frac 1 \alpha}  t_{{m}}^{\frac 1 \alpha}$
and that we have 
\begin{equation}
\label{eqn:osc-um}
|t_{{m}} u_m(y^{j}_m) - t_{{m}}u_m(y^{j-1}_m)| = t_{{m}} s_m \ell_m =t_{{m}} \frac{1}{p^m}  =2\pi,
\end{equation}
so that 
$\e^{it_{{m}}u_m(y^{j}_m)} = \e^{it_{{m}}u_m(y^{j-1}_m)}$.
Hence 
\begin{equation}\label{eqn:fast3}
\begin{split}
 (I) &\leq {C}t_{{m}}^{-\frac 1 \alpha}  \sum_{j=1}^{N_m} \Big(\int_{y^{j-1}_m}^{y^{j}_m} |\varphi'(y)| \dd y
+ | \varphi(y^{j}_m) -  \varphi(y^{j-1}_m)| \Big)
\leq  {C}t_{{m}}^{-\frac 1 \alpha}  \| \varphi'\|_{L^1}.
\end{split}
\end{equation}
Next, let $\{ y^{j,k}_m\}_{j={0},..., N_m, k=0,...,p}$ be as in Section~\ref{sec:constr}. We claim that for every $j=0,..., {N_m}-1$ and for every $h_{m+1}=\ell_m p^{-1}$-periodic function $f: [y^{j}_m,y^{j+1}_m] \to \R$, namely
 such that
$$f(x+ y^{j}_{m})= f(x+ y^{j,k}_{m}) \qquad \mbox{for every } x\in [0, \ell_{m} p^{-1}], \;{k=0,...,p-1,} $$
 we have 
\begin{equation}
\label{eqn:meanzerof}
\int_{y^{j}_m}^{y^{j{+1}}_m}  \e^{it_{{m}} (u_m(y)+f(y))} \dd y = 0.
\end{equation}
To prove this claim, {observe that by periodicity of $f$}
\begin{equation}
\begin{split}
\int_{y^{j}_m}^{y^{j+1}_m}  \e^{it (u_m(y)+f(y))} \dd y &= \sum_{k=0}^{p-1}\int_{y^{j,k}_m}^{y^{j,k{+1}}_m}  \e^{it (u_m(y)+f(y))} \dd y \\
&= \sum_{k=0}^{p-1}\int_{0}^{\ell_m p^{-1}}  \e^{it (u_m(y+y^{j,k}_m)+f(y+y^{j,k}_m))} \dd y
\\
&= \int_{0}^{\ell_m p^{-1}}   \e^{itf(y)} \sum_{k=0}^{p-1}\e^{it u_m(y+y^{j,k}_m)} \dd y.
\end{split}
\end{equation}
Since $t_{{m}} u_m:[y^{j}_m,y^{j+1}_m] \to \R$ is a linear function  such that $t_{{m}} u_m(y^{j+1}_m) - t_{{m}} u_m(y^j_m)= \pm2\pi$, we have
$$\sum_{i=0}^{p-1}\e^{it_{{m}} u_m(y+y^{j,k}_m)}= \sum_{k=0}^{p-1}\e^{it_{{m}} u_m(y)+ i{k} \frac{2\pi}p}=0$$
and this completes the proof of \eqref{eqn:meanzerof}.

For any given $j =1,.., N_m$ we observe by an inductive argument that $(u_{m'}-u_m)$ is a $\ell_m p^{-1}$-periodic function in $[y^{j-1}_m,y^{j}_m]$ for every $m'\geq m$.
Indeed, this is trivial for $m'=m$, and it is verified by the explicit construction of $u_{m'+1}$ once $u_{m'}$ satisfies this property. Hence, applying \eqref{eqn:meanzerof} to $f=0$ and to $f= t_{{m}}(u_{m'}-u_m):[y^{j-1}_m,y^{j}_m] \to \R$ we deduce that
\begin{equation}
\label{eqn:meanzero}
\int_{y^{j-1}_m}^{y^{j}_m}  \e^{it_{{m}}u_m(y)} \dd y =  \int_{y^{j-1}_m}^{y^{j}_m}  \e^{it_{{m}}u(y)} \dd y = 0.
\end{equation}
Coming back to (II), we rewrite it using \eqref{eqn:meanzero} and then we estimate by means of Fubini's theorem
\begin{equation}\label{eqn:fast4}
\begin{split}
(II) &=    \sum_{j=1}^{N_m} \int_{y^{j-1}_m}^{y^{j}_m} \big( \e^{itu(y)}- \e^{itu_m(y)})(\varphi(y)-\varphi(y^{j-1}_m))\dd y
\\
&\leq    \sum_{j=1}^{N_m} \int_{y^{j-1}_m}^{y^{j}_m} |\varphi(y)-\varphi(y^{j-1}_m)|\dd y
\\
&\leq  \sum_{j=1}^{N_m} \int_{y^{j-1}_m}^{y^{j}_m} \int 1_{\{y^{j-1}_m \leq z \leq y\}} |\varphi'(z)| \dd z \dd y
\\
&\leq \ell_m \sum_{j=1}^{N_m} \int_{y^{j-1}_m}^{y^{j}_m} |\varphi'(z)| \dd z  =
( {2}\pi)^{-\frac 1 \alpha}  t_{{m}}^{-\frac 1 \alpha}  \| \varphi'\|_{L^1}.
\end{split}
\end{equation}
In the last equality we used that $\ell_m = \pi (pq)^{-m} = \pi p^{-\frac m\alpha}= 2^{-\frac 1 \alpha} ( \pi^{-1})^{\frac 1 \alpha}  t^{-\frac 1 \alpha}$.
From \eqref{eqn:fast1}, \eqref{eqn:fast3} and \eqref{eqn:fast4} we obtain \eqref{ts:inv_decay2}.

\textbf{Proof of \eqref{ts:inv_decay3}.}
We start computing the left-hand side in \eqref{ts:inv_decay3}
\begin{equation}\label{eqn:low1}
\int_\T \e^{it_m'u(y)}\dd y= \sum_{j=1}^{N_m/2}
 \int_{y^{2j-2}_m}^{y^{2j}_m}  \e^{it_m'u(y)} \dd y.
\end{equation}
To rewrite the right-hand side, we first prove the following claim: for every $m\in \N$, $j\in \{0,..., N_{m}-1\}$, $k \in \{0,..., p-1\}$ we have
\begin{equation}\label{eqn:low5}
 \int_{y^{j,k}_m}^{y^{j,k+1}_m}  \e^{it_m'u_{m+1}(y)} \dd y
=
 \int_{y^{j,k}_m}^{y^{j,k+1}_m}  \e^{it_m'u_m(y)} \dd y.
\end{equation}
Indeed, let us define the following set of $q$ elements $L_{m,j,k}= \{l: y^{l}_{m+1}\in[{y^{j,k}_m},{y^{j,k+1}_m})\}$, and let $l_{min}$ be the minimum element of $L_{m,j,k}$, which corresponds to $y^{l_{min}}_{m+1}={y^{j,k}_m}$. By definition of $u_{m+1}$, we have that {for $y\in[0,\ell_{m+1}]$}
$$u_{m+1}(y^{2l}_{m+1}+y) = u_{m+1} (y^{l_{min}}_{m+1}+y),
\qquad u_{m+1}(y^{2l+1}_{m+1}+y) = u_{m+1} (y^{l_{min}}_{m+1}+ \ell_{m+1}-y)$$
and finally we observe that $ u_{m+1} (y^{l_{min}}_{m+1}+y) = u_m(y^{l_{min}}_{m+1}+q y)$, {since $u_{m+1}$ has $q$ times the slope of $u_m$}.
Hence we can rewrite the left-hand side of \eqref{eqn:low5} as
\begin{equation}
\begin{split}
 \int_{y^{j,k}_m}^{y^{j,k+1}_m}  \e^{it_m'u_{m+1}(y)} \dd y
&= \sum_{l \in L_{m,j,k}} \int_{y^{l}_{m+1}}^{y^{l+1}_{m+1}}\e^{it_m'u_{m+1}(y)} \dd y
= \sum_{l \in L_{m,j,k}} \int_{0}^{\ell_{m+1}}\e^{it_m'u_{m+1}(y^{l}_{m+1}+y)} \dd y
\\&= q \int_{0}^{\ell_{m+1}}\e^{it_m'u_{m+1}(y^{l_{min}}_{m+1}+y)} \dd y
= q \int_{0}^{\ell_{m+1}}\e^{it_m'u_{m}(y^{l_{min}}_{m+1}+q y)} \dd y
\\&=  \int_{0}^{q\ell_{m+1}}\e^{it_m'u_{m}(y^{l_{min}}_{m+1}+y)} \dd y
=
 \int_{y^{j,k}_m}^{y^{j,k+1}_m}  \e^{it_m'u_m(y)} \dd y.
\end{split}
\end{equation}

From \eqref{eqn:low5}, we deduce the following equality by induction on $l$
\begin{equation}\label{eqn:low6}
 \int_{y^{2j-2}_m}^{y^{2j}_m}  \e^{it_m'u_l(y)} \dd y
=
 \int_{y^{2j-2}_m}^{y^{2j}_m}  \e^{it_m'u_m(y)} \dd y \qquad \mbox{for every } l \geq m.
\end{equation}
Indeed, to verify the inductive step we assume that this equality holds for $l$, and we have 
\begin{equation}\label{eqn:low7}
 \int_{y^{2j-2}_m}^{y^{2j}_m}  \e^{it_m'u_{l+1}(y)} \dd y
= \sum_{j,k} \int_{y^{j,k}_l}^{y^{j,k+1}_l}  \e^{it_{{m}}'u_{l+1}(y)} \dd y
=\sum_{j,k} \int_{y^{j,k}_l}^{y^{j,k+1}_l} 
 \e^{it_m'u_{l}(y)} \dd y
=
 \int_{y^{2j-2}_m}^{y^{2j}_m}  \e^{it_m'u_m(y)} \dd y,
\end{equation}
where the sum is taken over all $j, k$ such that $y^{j,k}_l \in [{y^{2j-2}_m},{y^{2j}_m})$. This proves \eqref{eqn:low6}.
Letting $l \to \infty$ in \eqref{eqn:low6}, we find that each term in the right-hand side of \eqref{eqn:low1} can be rewritten only in terms of $u_m$, instead of $u$
\begin{equation}\label{eqn:low2}
 \int_{y^{2j-2}_m}^{y^{2j}_m}  \e^{it_m'u(y)} \dd y
=
 \int_{y^{2j-2}_m}^{y^{2j}_m}  \e^{it_m'u_m(y)} \dd y.
\end{equation}
By \eqref{eqn:osc-um} and since $t_m' =t_m/2$ we know that 
\begin{equation}
\label{eqn:oscumprimo}
|t_m'u_m(y^{j+1}_m) - t_m'u_m(y^{j}_m)| =\pi \qquad \forall j=0,..., N_m-1 .
\end{equation}
so that, since $t_m' u_m$ is linear in $[y^{j}_m,y^{j+1}_m]$, 
\begin{equation}
\label{eqn:lin-explicit}
t_m' u_m(y^j_m + \ell_m y) =  t_m'u_m(y^{j}_m) +\frac{u_m'(y^{j+}_m)} {|u_m'(y^{j+}_m)|} \pi y,\qquad {y\in [0,1]}.
\end{equation}
Since $u_m(-\pi) = u_m (y^0_m)=0,$ \eqref{eqn:oscumprimo} implies that  ${t'_m}u_m (y^j_m)= a_j \pi$, with $a_j $ an integer number which is even if and only if $j$ is even. In other words, we have 
\begin{equation}\label{eqn:pm1}
\e^{it_m'u_m(y^{j}_m) }= \begin{cases}
1 \qquad & \mbox{if } j \mbox{ is even}
\\-1\qquad & \mbox{if } j \mbox{ is odd.}
\end{cases}
\end{equation}
Hence, by a change of variable and \eqref{eqn:lin-explicit}, we have {with \eqref{eqn:pm1} that}
\begin{equation}
 \begin{split}
 \int_{y^{j-1}_m}^{y^{j}_m}  \e^{it_m'u_m(y)} \dd y
&=
\ell_m \int_{0}^{1}  \e^{it_m' u_m(y^j_m + \ell_m y)} \dd y
\\&=
\ell_m\e^{i t_m'u_m(y^{j}_m)} \int_{0}^{1}  \e^{{i}\frac{u_m'(y^{j+}_m)} {|u_m'(y^{j+}_m)|} \pi y} \dd y
=\frac{2i \ell_m}{\pi} \frac{u_m'(y^{j+}_m)} {|u_m'(y^{j+}_m)|} \e^{i t_m'u_m(y^{j}_m)} .
 \end{split}
\end{equation}
{(}Notice that, in view of \eqref{eqn:pm1}, the last two terms in this product are $\pm 1$.{)} By applying this formula on two consecutive intervals, we obtain
\begin{equation}\label{eqn:low3}
 \int_{y^{2j-2}_m}^{y^{2j}_m}  \e^{it_m'u_m(y)} \dd y
=
\begin{cases}
0 \qquad&\mbox{ if }u_m'(y^{2j-2}_m) = u_m'(y^{2j-1}_m)
\\ 4i \frac 1 {(pq)^m} \qquad&\mbox{ if }u_m'(y^{2j-2}_m) =s_m= -u_m'(y^{2j-1}_m)
\\-4i \frac 1 {(pq)^m}  \qquad&\mbox{ if }u_m'(y^{2j-2}_m) =-s_m = -u_m'(y^{2j-1}_m).
\end{cases}
\end{equation}
Therefore, from \eqref{eqn:low1}, \eqref{eqn:low2} and \eqref{eqn:low3} we get
\begin{equation}\label{eqn:boh}
\begin{split}
\int_\T \e^{itu(y)}\dd y&=
4i \frac 1 {(pq)^m}  M_m,
\end{split}
\end{equation}
where 
$$M_m=  \#\{j: u_m'(y^{2j-2}_m) =s_m = -u_m'(y_m^{2j-1} \} - \#\{j: u_m'(y^{2j-2}_m) =-s_m = -u_m'(y_m^{2j-1} \} .$$
Hence we are left to estimate $M_m$ inductively on $m$; we show that
\begin{equation}
\label{eqn:Mm}
M_m = {C}q^m.
\end{equation}
Indeed, suppose that a certain double interval {$[y_{m-1}^{2j-2},y_{m-1}^{2j}]$} in $u_{m-1}$ contains a positive slope followed by a negative slope. When we split these two intervals into $2p$ subintervals for the construction of $u_{m}$, the first $p-1$ as well as the last $p-1$ do not contribute in $M_{m}$, namely each pair of consecutive intervals contains exactly the same number of elements in the first and in the second set in the definition of $M_m$. In turn, the central two intervals contribute with $q$ elements of the first set. 

By symmetry, we obtain the same result when a certain doubled interval in $u_{m-1}$ contains a negative slope followed by a positive slope. Finally, the {analogous} argument can be performed in the case where in certain doubled interval in $u_{m-1}$ there are two slopes of the same sign. 

From \eqref{eqn:boh} and \eqref{eqn:Mm} we conclude \eqref{ts:inv_decay3}.
\end{proof}

\section{Enhanced dissipation}\label{sec:enhanced}
Our control on the enhanced dissipation is most precisely stated as estimates for the semigroup of the linear operator
\begin{equation}\label{eq:fadsds}
 \cL_\nu:=-u\de_x+\nu\de_{yy}
\end{equation}
closely related to equation \eqref{eq:passcal}. {Indeed, notice that
\begin{align}
\| \e^{(\nu\de_{xx}+\cL_\nu) t}P_k  \|_{L^2\to L^2}\leq \| \e^{\cL_\nu t}P_k  \|_{L^2\to L^2} \qquad \forall t\geq0,
\end{align}
so only the semigroup generated by \eqref{eq:fadsds} needs to be considered.}
\begin{proposition}\label{prop:enhdissip}
There exist positive constants $\eps, C>0$ such that for every $\nu>0$ and every integer $k\neq 0$
satisfying $\nu|k|^{-1}\leq 1/2$ we have
\begin{align}
\| \e^{\cL_\nu t}P_k  \|_{L^2\to L^2}\leq C\e^{-\eps\lambda_{\nu,k}t}, \qquad \forall t\geq0,
\end{align}
where $P_k$ denotes the projection to the $k$-th Fourier mode in $x$ and
\begin{align}
\lambda_{\nu,k}=\nu^\frac{\alpha}{\alpha+2}|k|^\frac{2}{\alpha+2}
\end{align}
is the decay rate.
\end{proposition}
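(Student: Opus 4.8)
The plan is to pass to the $k$-th Fourier mode in $x$ and deduce the semigroup bound from a resolvent estimate via the quantitative Gearhart--Pr\"uss theorem (this is the ``criterion devised in \cite{WEI18}''). Acting on $\e^{ikx}h(y)$ the operator $\cL_\nu$ reduces to $\cL_{\nu,k}h:=\nu h''-ikuh$ on $L^2(\T)$, so $\norm{\e^{\cL_\nu t}P_k}_{L^2\to L^2}=\norm{\e^{\cL_{\nu,k}t}}_{L^2_y\to L^2_y}$. Since $\Re\ip{\cL_{\nu,k}h,h}=-\nu\norm{h'}_{L^2}^2\leq0$, Gearhart--Pr\"uss reduces matters to producing $\eps=\eps(p,q)>0$ such that
\begin{equation}\label{eq:resolvent-plan}
 \norm{\nu f''-ik(u-\mu)f}_{L^2(\T)}\geq \eps\,\lambda_{\nu,k}\,\norm{f}_{L^2(\T)}\qquad\text{for all }\mu\in\R,\ f\in H^2(\T),
\end{equation}
because the left-hand side equals $\norm{(\cL_{\nu,k}-i\lambda)f}_{L^2}$ with $\mu=-\lambda/k$, so \eqref{eq:resolvent-plan} gives $\norm{(\cL_{\nu,k}-i\lambda)^{-1}}\leq(\eps\lambda_{\nu,k})^{-1}$ uniformly in $\lambda\in\R$, hence $\norm{\e^{\cL_{\nu,k}t}}\leq \e\cdot\e^{-\eps\lambda_{\nu,k}t}$. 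We may take $k\geq1$ and set $\delta:=(\nu/k)^{1/(\alpha+2)}$, so that $\lambda_{\nu,k}=\nu\delta^{-2}=k\delta^{\alpha}$; the hypothesis $\nu k^{-1}\leq1/2$ gives $\delta<1$. We describe the proof for small $\delta$, the complementary range $\delta\gtrsim_{p,q}1$ (where $\lambda_{\nu,k}\simeq\nu$) needing only minor modifications with intervals of length $\simeq\min(\delta,1)$.

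To prove \eqref{eq:resolvent-plan} argue by contradiction: assume $\norm{g}_{L^2}<\eps\lambda_{\nu,k}$, $g:=\nu f''-ik(u-\mu)f$, $\norm{f}_{L^2}=1$. Real and imaginary parts of $\ip{g,f}$ give the two ``energy'' bounds $\nu\norm{f'}_{L^2}^2\leq\eps\lambda_{\nu,k}$, i.e.\ $\norm{f'}_{L^2}^2\leq\eps\delta^{-2}$ (so $f$ varies slowly at scale $\delta$), and $\abs{k\int_\T(u-\mu)\abs{f}^2\,\dd y}\leq\eps\lambda_{\nu,k}$. Now fix a large $K=K(p,q)$, to be chosen, and partition $\T$ into $\simeq(K\delta)^{-1}$ intervals $\tilde I_i$ of length $\simeq K\delta\ (\leq1)$. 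On each $\tilde I_i$ the sharp lower-H\"older property \eqref{eq:sharpCalpha} gives $\sup_{\tilde I_i}u-\inf_{\tilde I_i}u\geq C(K\delta)^\alpha$, so some $a\in\tilde I_i$ has $\abs{u(a)-\mu}\geq\tfrac{C}{2}(K\delta)^\alpha$; since $u\in C^\alpha$, a subinterval $\tilde J_i\subset\tilde I_i$ of length $\geq\rho K\delta$ (with $\rho=\rho(p,q)\in(0,1]$) carries a fixed sign of $u-\mu$ together with $\abs{u-\mu}\geq\eta$, $\eta\simeq_{p,q}K^\alpha\delta^\alpha$.

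Next test the equation against $\chi_i^2 f$, with $\chi_i$ a cutoff supported in $\tilde J_i$, equal to $1$ on a comparable subinterval, $\norm{\chi_i'}_\infty\lesssim_{p,q}(K\delta)^{-1}$. Taking imaginary parts,
\begin{equation}
 k\int_\T(u-\mu)\chi_i^2\abs{f}^2\,\dd y = \nu\,\Im\ip{f'',\chi_i^2 f}-\Im\ip{g,\chi_i^2 f};
\end{equation}
bounding the left-hand side below by $k\eta\int\chi_i^2\abs{f}^2$, estimating $\ip{g,\chi_i^2 f}$ by Cauchy--Schwarz, and integrating by parts in the first term on the right (using $\nu=\lambda_{\nu,k}\delta^2$), one arrives, after absorbing for $K$ large the $\norm{\chi_i'}_\infty$-terms (which carry a gain $K^{-1-\alpha}$) and a Poincar\'e-type bound using the slow variation of $f$ at scale $K\delta$, at
\begin{equation}
 \int_{\tilde I_i}\abs{f}^2\,\dd y\ \lesssim_{p,q}\ K^2\delta^2\int_{\tilde I_i}\abs{f'}^2\,\dd y + \frac{1}{k^2\eta^2}\int_{\tilde I_i}\abs{g}^2\,\dd y.
\end{equation}
Summing over $i$ (the $\tilde I_i$ partition $\T$) and inserting the two energy bounds together with $\lambda_{\nu,k}=k\delta^\alpha$, $\eta\simeq K^\alpha\delta^\alpha$, yields
\begin{equation}
 1=\norm{f}_{L^2}^2\ \lesssim_{p,q}\ K^2\delta^2\norm{f'}_{L^2}^2+\frac{\norm{g}_{L^2}^2}{k^2\eta^2}\ \lesssim_{p,q}\ K^2\eps+\frac{\eps^2}{K^{2\alpha}}.
\end{equation}
Choosing $K=K(p,q)$ large enough for the absorptions and then $\eps=\eps(p,q)$ small makes the right-hand side $<1$, a contradiction, proving \eqref{eq:resolvent-plan} and hence the proposition.

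The piecewise linear $u_m$ of Section~\ref{sec:constr} enter at the places where the argument would naively need a derivative of the profile: the construction of the subintervals $\tilde J_i$ with a \emph{definite} (not merely positive) length, and the bookkeeping of all constants, is carried out cleanly at the level of $u_m$ with $\ell_m\simeq K\delta$, using the explicit slopes $s_m=\pi^{-1}q^m$ and the sign structure of Lemma~\ref{lem:Ssplit}; moreover the criterion of \cite{WEI18} is phrased for shear profiles realized as limits of such piecewise linear functions, so applying it amounts to verifying the quantitative self-similar structure of $u$. The main obstacle is precisely this coupled optimization: one must balance the diffusive term $\nu\delta^{-2}$, the shearing term $k\eta\simeq k\delta^\alpha$, and the cutoff errors, all with constants uniform in $\nu,k$, while the absence of $u\in C^1$ forces any differentiation of the profile — whether in an $H^2$-energy step or in the \cite{WEI18} framework — to be routed through the approximations $u_m$ and their explicit cancellation properties.
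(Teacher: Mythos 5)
Your proof is correct, but it takes a genuinely different route from the paper. The paper applies \cite{WEI18}*{Theorem 5.1} as a black box: that theorem reduces the enhanced-dissipation bound to a lower bound on the quantity
\begin{equation}
\omega_1(\delta,u)=\inf_{\bar{y},c_1,c_2}\int_{\bar{y}-\delta}^{\bar{y}+\delta}\abs{\psi(y)-c_1-c_2y}^2\dd y,
\end{equation}
i.e.\ the $L^2$-distance of the \emph{stream function} $\psi$ (with $\psi'=-u$) from affine functions on intervals of length $2\delta$. The paper then proves $\omega_1(\delta,u)\geq C\delta^{2\alpha+3}$ by choosing $m$ with $2\ell_m\leq\delta<2\ell_{m-1}$, introducing the second-order difference operator $\Delta_h^2\varphi(y)=\varphi(y)-2\varphi(y+h)+\varphi(y+2h)$ with $h=\ell_m/p$, and observing that $\Delta_h^2$ annihilates affine functions and $h$-periodic functions; the crucial structural input is that $\psi_{k+1}-\psi_k$ is $h$-periodic on the relevant grid interval for all $k\geq m$, so that $\Delta_h^2\psi=\Delta_h^2\psi_m=s_mh^2$ exactly -- a cancellation identity specific to this construction. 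You instead bypass \cite{WEI18}*{Theorem 5.1} altogether and prove the required resolvent estimate directly via a localization/hypocoercivity argument, using only the sharp lower-H\"older property \eqref{eq:sharpCalpha} of $u$ (not of $\psi$) to produce subintervals where $\abs{u-\mu}\gtrsim(K\delta)^\alpha$ with fixed sign, then testing against $\chi_i^2f$ and closing with Poincar\'e. This buys you independence from \cite{WEI18}*{Theorem 5.1}'s specific formulation in terms of the stream function, and your argument would work verbatim for \emph{any} sharply lower-H\"older profile $u\in C^\alpha$, not just the self-similar one built here; the price is redoing by hand a Gearhart--Pr\"uss-plus-resolvent computation that the paper imports wholesale. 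One inessential wrinkle: the parameter $K$ need not be large -- with $K=1$ you still reach $1\lesssim_{p,q}\eps+\eps^2$, and taking $\eps=\eps(p,q)$ small already gives the contradiction, so the ``choose $K$ large then $\eps$ small'' optimization, while harmless, is not actually used.
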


{The result is a consequence of the following general Gearhart-Pr\"uss theorem, established in \cite{WEI18}.}

\begin{theorem}[\cite{WEI18}*{Theorem 1.3}]
{Let $H$ be an m-accretive operator in a Hilbert space $X$, with domain $D(H)\subset X$. Then $\| \e^{-tH}\|_{X\to X}\leq \e^{-t\Psi(H)+\pi/2}$, where
\begin{align}
\Psi(H)=\inf\left\{\|(H-i\lambda)g\|_X: g\in D(H),\,\lambda\in \R,\, \|g\|_X=1\right\}.
\end{align}}
\end{theorem}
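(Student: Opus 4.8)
The statement is the abstract Gearhart--Pr\"uss type bound of \cite{WEI18}: for $H$ m-accretive on a Hilbert space $X$ one has $\|\e^{-tH}\|_{X\to X}\leq \e^{-t\Psi(H)+\pi/2}$. The strategy is the classical one: express the semigroup via the inverse Laplace transform of the resolvent, and use the quantitative resolvent bound encoded in $\Psi(H)$ to shift the contour and estimate. The key input is that m-accretivity gives $\{\Re z<0\}\subset\rho(-H)$ together with $\|(H+z)^{-1}\|\leq (\Re z)^{-1}$ for $\Re z>0$; and the definition of $\Psi(H)$ upgrades this to a bound on the imaginary axis shifted by $\Psi(H)$, namely $\|(H-i\lambda)^{-1}\|\leq \Psi(H)^{-1}$ for all $\lambda\in\R$ once $\Psi(H)>0$ (if $\Psi(H)=0$ the asserted inequality is vacuous since the right-hand side is $\e^{\pi/2}$, which dominates $\|\e^{-tH}\|\leq 1$, so we may assume $\Psi:=\Psi(H)>0$).

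\textbf{Steps.} First, reduce to $g\in D(H)$ with enough decay: for such $g$, $u(t):=\e^{-tH}g$ solves $u'=-Hu$, and its Laplace transform $\widehat u(z)=(H+z)^{-1}g$ is holomorphic and bounded on $\{\Re z>0\}$ with the accretive bound above. Second, write the inversion formula
\begin{equation}
\e^{-tH}g=\frac{1}{2\pi i}\int_{\gamma-i\infty}^{\gamma+i\infty}\e^{tz}(H+z)^{-1}g\,\dd z,\qquad \gamma>0,
\end{equation}
valid in $X$. Third, move the contour to $\Re z=-\Psi$ (or, more carefully, to $\Re z=-\delta\Psi$ for $\delta\in(0,1)$ and then optimize/let $\delta\to 1$): on the vertical line $z=-a+i\lambda$ with $0\le a<\Psi$ the resolvent identity
\begin{equation}
(H+z)^{-1}=(H-i\lambda)^{-1}\bigl(I+ (-a)(H-i\lambda)^{-1}\bigr)^{-1}
\end{equation}
together with $\|(H-i\lambda)^{-1}\|\le \Psi^{-1}$ shows $(H+z)^{-1}$ exists and is bounded by $(\Psi-a)^{-1}$ there, so no spectrum is crossed and the shift is justified (using a Cauchy-type argument on rectangles together with the decay of the resolvent in $\lambda$ coming from $\widehat u$). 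Fourth, estimate on the shifted contour:
\begin{equation}
\|\e^{-tH}g\|_X\le \frac{\e^{-at}}{2\pi}\int_{\R}\bigl\|(H+(-a+i\lambda))^{-1}g\bigr\|_X\,\dd\lambda.
\end{equation}
The remaining point is that the integral in $\lambda$ is only barely divergent; one controls it by pairing the two-sided bound $\|(H+z)^{-1}\|\le\min\{(\Psi-a)^{-1},\ \text{(growth in }\lambda)\}$ so that $\int_\R$ produces a constant. Optimizing $a\uparrow\Psi$ and absorbing the resulting constant into the factor $\e^{\pi/2}$ yields $\|\e^{-tH}g\|_X\le \e^{\pi/2}\e^{-\Psi t}\|g\|_X$; density of the good $g$'s in $X$ finishes the proof. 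The sharp constant $\pi/2$ is obtained by a careful choice of contour --- e.g. following \cite{WEI18}, splitting the line $\Re z=-\Psi$ into $|\lambda|\le R$ and $|\lambda|>R$, on the first part using the uniform resolvent bound and on the second a bound like $\|(H+z)^{-1}g\|\le \|z\|^{-1}\|(H+z)g\|$-type manipulations (or integrating by parts once in $t$ to gain a factor $z^{-1}$, which makes $\int_\R |z|^{-1}\dd\lambda$-type terms converge up to logarithms that are then handled by a further integration by parts), and finally computing the optimal balance, which is exactly where the numerical constant $\pi/2$ appears.

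\textbf{Main obstacle.} The crux is not the contour shift --- that is a soft consequence of m-accretivity and the definition of $\Psi(H)$ --- but rather the \emph{quantitative} estimate on the shifted contour with the \emph{exact} constant $\pi/2$. A naive bound of $\int_\R\|(H-i\lambda+ \text{shift})^{-1}g\|\,\dd\lambda$ diverges, so one must exploit extra cancellation/decay of the resolvent at large $|\lambda|$, typically by one or two integrations by parts in the inversion integral (trading powers of $z$ for derivatives of $u$, which are again controlled because $g\in D(H^k)$), and then perform the precise optimization over the contour location that produces $\pi/2$ rather than a larger constant. This bookkeeping --- ensuring the error terms from the integration by parts are genuinely lower order and that the optimization is tight --- is the technical heart of the argument; since this theorem is quoted verbatim from \cite{WEI18}, in the present paper one would simply cite it, but the proof plan above indicates how it is established there.
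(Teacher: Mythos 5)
This theorem is not proved in the paper at all: it is quoted verbatim as a black-box citation of \cite{WEI18}*{Theorem 1.3}, so there is no in-paper argument to compare against. Judged on its own terms, your proposal is a plan for the classical Gearhart--Pr\"uss route (Laplace inversion of the resolvent plus a contour shift), and its preliminary steps are sound: the reduction to $\Psi(H)>0$, the Neumann-series argument showing $\|(H+z)^{-1}\|\leq(\Psi-a)^{-1}$ on the line $\Re z=-a$ for $0\leq a<\Psi$, and the justification of the contour shift are all fine.

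The genuine gap is exactly where you locate it, and it is not closed. The integral $\int_{\R}\|(H+(-a+i\lambda))^{-1}g\|_X\,\dd\lambda$ on the shifted line diverges, and the remedies you gesture at (``integrating by parts once in $t$ to gain a factor $z^{-1}$'', ``computing the optimal balance, which is exactly where $\pi/2$ appears'') are not carried out and do not in fact produce the stated constant. The standard Hilbert-space fix for this divergence is Plancherel's theorem applied to $\lambda\mapsto(H+\gamma+i\lambda)^{-1}g$ (which equals the Fourier transform in $t$ of $\e^{-\gamma t}\e^{-tH}g$); this closes the classical argument but yields a decay rate strictly smaller than $\Psi$ and a non-explicit multiplicative constant, i.e.\ a strictly weaker statement than the one being proved. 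The sharp bound $\|\e^{-tH}\|\leq\e^{-t\Psi(H)+\pi/2}$ in \cite{WEI18} is obtained by an entirely different, elementary energy/ODE argument on $u(t)=\e^{-tH}g$: one combines the contraction property $\frac{\dd}{\dd t}\|u\|^2\leq 0$ from accretivity with the pointwise lower bound $\|\partial_t(\e^{i\lambda t}u)\|=\|(H-i\lambda)u\|\geq\Psi\|u\|$, and the constant $\pi/2$ arises from tracking how far the unit direction $u(t)/\|u(t)\|$ can rotate before the norm must have decayed --- not from any contour optimization. So while your outline could be completed into a proof of a qualitative version of the theorem, as a proof of the quoted statement (with rate exactly $\Psi(H)$ and constant exactly $\e^{\pi/2}$) the decisive quantitative step is missing, and the mechanism you propose for it points in the wrong direction.
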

{The quantity $\Psi(H)$ above is related to the \textit{pseudo-spectral} properties of the operator $H$, see \cite{GGN09}. In our particular case, for which $H=-\cL_\nu$ and $X=P_k L^2$, a direct way to 
estimate the pseudo-spectrum is provided in  \cite{WEI18}*{Theorem 5.1}.} The setup is as follows: Let $\psi$ be the stream function associated to $u$, such that
\begin{align}
\psi'(y)=-u(y), \qquad \int_{\T}\psi(y)\dd y=0.
\end{align}
As in \cite{WEI18}, for $\delta\in(0,1)$ we define 
\begin{align}
\omega_1(\delta,u)=\inf_{\bar{y},c_1,c_1\in \R}\int_{\bar{y}-\delta}^{\bar{y}+\delta} \abs{\psi(y)-c_1-c_2y}^2\dd y.
\end{align}
Then according to \cite{WEI18}*{Theorem 5.1}, our Proposition \ref{prop:enhdissip} follows if we can show that there exist $C_1>0$ such that for all $\delta\in(0,1)$ there holds
\begin{equation}\label{eq:omega1}
 \omega_1(\delta,u)\geq C_1\delta^{2\alpha+3}.
\end{equation}
{ As shown in \cite{WEI18}, for $\delta= (\nu/\lambda_{\nu,k})^{1/2}<1$, the  quantity $\omega_1(\delta,u)$ then provides the desired lower bound $\Psi(\cL_\nu) \gtrsim \lambda_{\nu,k}$.} 
\begin{proof}[Proof of \eqref{eq:omega1}]
For any function
$\varphi\in C(\T)$ and any $h\in \R$, we define a difference operator $\Delta_h^2:C(\T)\to C(\T)$ by
\begin{align}
\Delta_h^2\varphi(y):=\varphi(y)-2\varphi(y+h)+\varphi(y+2h),
\end{align}
analogous to that in \cite{WEI18}.
Note that $\Delta_h^2$ is a linear operator, and by construction it vanishes on affine functions, i.e.
\begin{align}\label{eq:Delta2_aff}
\Delta_h^2[ay+b]=0,\qquad a,b\in\R.
\end{align}
Moreover, if $\varphi$ is $h$-periodic, then 
\begin{align}
\Delta_h^2\varphi=0.
\end{align}

This difference operator allows us to obtain lower bounds for $\omega_1$ as follows: For $\delta\in(0,1)$ and $\bar{y}\in\T$ given, choose $m\in\N$ such that $2\ell_{m}\leq\delta<2\ell_{m-1}$. Then there exists $y_m^j$ such that the interval $J=[y^m_j,y^m_{j+1}]\subset [\bar{y}-\delta,\bar{y}+\delta]$. We note that $u_m$ is linear on $J$ and without loss of generality has slope $s_m$ (else $-s_m$). Recall that $J$ has length $\abs{J}$, let 
\begin{align}
h:=h_{m+1}=\frac{\ell_m}{p},
\end{align}
and observe that $J':=[y_m^j,y_m^j+3h]\subset J$ since $p\geq 3$.
In addition (and crucially), for any $k\geq m$, the function $\psi_{k+1}-\psi_k$ is
$h$-periodic on $J$. Therefore, 
\begin{align}\label{eq:psi-psim}
\Delta_h^2\psi=\Delta_h^2\psi_m+ \Delta_h^2\left(\sum_{k\geq m} \psi_{k+1}-\psi_k\right)=\Delta_h^2\psi_m, \qquad \text{on } J'.
\end{align}
Moreover, since $\Delta_h^2$ is linear and vanishes on affine functions \eqref{eq:Delta2_aff}, by direct computation we have for $y\in J'$ that
\begin{align}
\Delta_h^2\psi_m (y)=\Delta_h^2[\frac{s_m}{2}y^2]= \frac{s_m}{2}y^2-s_m (y+h)^2 +\frac{s_m}{2}(y+2h)^2=s_m h^2.
\end{align}
It thus follows that
\begin{align}\label{eq:psim_int}
 \int_{y_m^j}^{y_m^j+h}\abs{\Delta_h^2\psi_m (y)}^2\dd y= s_m^2 h^5=C_{p,\alpha}h^{3+2\alpha},
\end{align}
where we used $(pq)^\alpha=p$ to conclude that
\begin{align}
 s_m^2h^{2-2\alpha}=\left(\frac{1}{\pi}\right)^2q^{2m}\cdot \left(\frac{\ell_m}{p}\right)^{2-2\alpha}=\frac{1}{\pi^{2\alpha}p^{2-2\alpha}}q^{2m}(pq)^{m(2\alpha-2)}=\frac{1}{\pi^{2\alpha}p^{2-2\alpha}}=:C_{p,\alpha}.
\end{align}
In particular, from \eqref{eq:psim_int} and \eqref{eq:psi-psim} we deduce that for any $c_1,c_2\in\R$
\begin{equation}\label{eq:omegapprox}
\begin{aligned}
 C_{p,\alpha}h^{3+2\alpha}&=\int_{y_m^j}^{y_m^j+h}\abs{\Delta_h^2\psi_m (y)}^2\dd y=\int_{y_m^j}^{y_m^j+h}\abs{\Delta_h^2\psi (y)}^2\dd y\\
 &=\int_{y_m^j}^{y_m^j+h}\abs{\Delta_h^2\left(\psi (y)-c_1-c_2y\right)}^2\dd y\\
 &\leq \int_{y_m^j}^{y_m^j+3h}\abs{\psi (y)-c_1-c_2y}^2\dd y\\
 &\leq \int_J\abs{\psi (y)-c_1-c_2y}^2\dd y\\
 &\leq \int_{\bar{y}-\delta}^{\bar{y}+\delta}\abs{\psi (y)-c_1-c_2y}^2\dd y.
\end{aligned} 
\end{equation}

Since the above arguments hold for any $\bar{y}\in\T$ and $c_1,c_2\in\R$, we may take the infimum as in the definition \eqref{eq:omega1} of $\omega_1$ and use the fact that $\delta<2\ell_{m-1}=2p^2q\cdot h$ to conclude from \eqref{eq:omegapprox} that
\begin{equation}
 \omega_1(\delta,u)\geq C_{p,\alpha}h^{3+2\alpha}\geq \frac{C_{p,\alpha}}{(2p^2q)^{3+2\alpha}}\cdot\delta^{3+2\alpha}.
\end{equation}
This concludes the proof. 
\end{proof}

\textbf{ Acknowledgements}. 
M.C. thanks Elia Bru\'e for interesting discussions on this topic; she has been supported by
the SNF Grant 182565 ``Regularity issues for the Navier-Stokes equations and for other variational problems".
M.C.Z. acknowledges funding from the Royal Society through a University Research Fellowship (URF\textbackslash R1\textbackslash 191492).


\bibliographystyle{abbrv}
\bibliography{Rough_biblio}

\end{document}